\theoremstyle{plain}
\newtheorem{theorem}{Theorem}[section]
\newtheorem{proposition}[theorem]{Proposition}
\theoremstyle{definition}
\newtheorem{remark}[theorem]{Remark}
\numberwithin{equation}{section}
\def\vr{\varrho}
\def\M{\mathfrak M}
\def\Mpl{\mathfrak M_+}
\def\N{\mathbb N}
\def\R{\mathbb R}
\def\rn{\mathbb R^n}
\def\esssup{\operatornamewithlimits{ess\,\sup}}
\def\A{{\mathbb A}}
\def\B{{\mathbb B}}
\def\d{{\fam0 d}}
\def\L1loc{L^1_{\text{loc}}}
\begin{document}

\title[Poincar\'{e}-Sobolev inequalities on $\rn$]{Poincar\'{e}-Sobolev inequalities with rearrangement-invariant norms on the entire space}
\author{Zden\v ek Mihula}

\address{Zden\v ek Mihula, Charles University, Faculty of Mathematics and Physics, Department of Mathematical Analysis, Sokolovsk\'a~83, 186~75 Praha~8, Czech Republic}
\email{mihulaz@karlin.mff.cuni.cz}
\urladdr{0000-0001-6962-7635}

\subjclass[2000]{26D10, 46E30, 46E35, 47B38}
\keywords{optimal spaces, Poincar\'{e} inequality, Poincar\'{e}-Sobolev inequality, rearrangement-invariant spaces, Sobolev spaces}

\thanks{This research was supported by the grant P201-18-00580S of the Grant Agency of the Czech Republic, by the grant SVV-2017-260455, and by Charles University Research program No.~UNCE/SCI/023.}

\begin{abstract}
Poincar\'{e}-Sobolev-type inequalities involving rearrangement-invariant norms on the entire $\rn$ are provided. Namely, inequalities of the type $\|u-P\|_{Y(\rn)}\leq C\|\nabla^m u\|_{X(\rn)}$, where $X$ and $Y$ are either rearrangement-invariant spaces over $\rn$ or Orlicz spaces over $\rn$, $u$ is a $m-$times weakly differentiable function whose gradient is in $X$, $P$ is a polynomial of order at most $m-1$, depending on $u$, and $C$ is a constant independent of $u$, are studied. In a sense optimal rearrangement-invariant spaces or Orlicz spaces $Y$ in these inequalities when the space $X$ is fixed are found. A variety of particular examples for customary function spaces are also provided.
\end{abstract}

\date{\today}

\maketitle

\setcitestyle{numbers}
\bibliographystyle{plainnat}


\section{Introduction}
Poincar\'{e}-Sobolev-type inequalities indisputably play a prominent role not only in the theory of Sobolev spaces but also in a wide range of applications in analysis of partial differential equations, calculus of variations, mathematical modeling or harmonic analysis (e.g.~\citep{MR1131493, MR2989444, MR2676347}). These types of inequalities have been exhaustively studied for decades and have been generalized in many different directions (e.g.~\citep{MR502660, MR805809, MR1683160, MR2989992, Mabook, MR0117419}). The standard Poincar\'{e}-Sobolev inequality on balls (e.g.~\citep[Corollary~1.64]{MR1461542}) can be stated as follows: if $p\in[1,n)$, $n\geq2$ (we assume this implicitly in the entire paper from now on), and $q\in[1,\frac{np}{n-p}]$, then there exists a constant $C$, depending on $p$,$q$ and on the dimension $n$ only, such that
\begin{equation}\label{intr:poincaresobolev:eq1}
\|u-\bar{u}_B\|_{L^q(B)}\leq C(n,p,q)r^{1+\frac{n}{q}-\frac{n}{p}}\|\nabla u\|_{L^p(B)},
\end{equation}
where $\bar{u}_B$ is the integral mean of $u$ over $B$ (the mean is well defined, see Section~\ref{sec:prel}), for each ball $B\subseteq\rn$ with the radius $r$ and each weakly differentiable function $u$ in $B$ whose gradient belongs to $L^p(B)$. With $q$ equal to the critical Sobolev exponent $\frac{np}{n-p}$, \eqref{intr:poincaresobolev:eq1} reads as
\begin{equation}\label{intr:poincaresobolev:eq2}
\|u-\bar{u}_B\|_{L^\frac{np}{n-p}(B)}\leq C(n,p)\|\nabla u\|_{L^p(B)}.
\end{equation}
Note that the inequality does not depend on the radius $r$ in this case. If $p=n$, then \eqref{intr:poincaresobolev:eq1} holds for each $q\in[1,\infty)$, but there is no optimal Lebesgue exponent $q$ that would render that inequality independent of $r$ as was possible with $p\in[1,n)$ and $q=\frac{np}{n-p}$. Nevertheless, the situation can be salvaged by introducing a finer scale of function spaces than the scale of Lebesgue spaces. Namely, if we substitute the critical Lebesgue space $L^n(B)$ with the smaller Lorentz space $L^{n,1}(B)$ (see Section~\ref{sec:prel} for the definition of Lorentz spaces), it can be proved (cf.~\citep{MR0146673, MR0221282}) that
\begin{equation}\label{intr:poincaresobolev:eq3}
\|u-\bar{u}_B\|_{L^\infty(B)}\leq C(n)\|\nabla u\|_{L^{n,1}(B)}.
\end{equation}

The inequalities \eqref{intr:poincaresobolev:eq2} and \eqref{intr:poincaresobolev:eq3} suggest that this type of inequality might be possibly extended to the case where balls are replaced with the entire $\rn$ provided that we find an appropriate replacement for $\bar{u}_B$. It was shown in \citep[Theorem~1.78]{MR1461542} (cf.~\citep{MR1190332}) that the inequality \eqref{intr:poincaresobolev:eq2} can be extended to the entire space. More precisely, they proved that if $p\in[1,n)$, then for each weakly differentiable function $u$ in $\rn$ whose gradient belongs to $L^p(\rn)$, there exists a (unique) $\lambda\in\R$, depending on $u$, such that
\begin{equation}\label{intr:poincaresobolev:eq4}
\|u-\lambda\|_{L^\frac{np}{n-p}(\rn)}\leq C\|\nabla u\|_{L^p(\rn)},
\end{equation}
where the constant $C$ is independent of $u$. However, the method of their proof cannot be used for extending the inequality \eqref{intr:poincaresobolev:eq3} to the entire space. Nevertheless, in \citep[Theorem~3.7]{MR3893783} (see also \citep[Theorem~3.10]{MR3893783} for a higher-order version) different techniques were used to prove that there exists a positive constant $C$ such that for each weakly differentiable function $u$ in $\rn$ whose gradient belongs to $L^{n,1}(\rn)$, there exists a $\lambda\in\R$, depending on $u$, such that
\begin{equation}\label{intr:poincaresobolev:eq5}
\|u-\lambda\|_{L^\infty(\rn)}\leq C\|\nabla u\|_{L^{n,1}(\rn)}.
\end{equation}

In this paper, not only do we present a uniform method that covers both inequalities \eqref{intr:poincaresobolev:eq4} and \eqref{intr:poincaresobolev:eq5} in a single theorem (see Theorem~\ref{thm:mainri}), but we also study these types of inequalities in much more general setting and provide, in a sense, a sharp version of such an inequality. As the discussion before \eqref{intr:poincaresobolev:eq3} suggests, the class of Lebesgue spaces is often not sufficiently fine and one needs to work with more delicate classes of function spaces, especially when limiting cases are considered. For this reason, we shall work within the rich class of rearrangement-invariant spaces, which are, loosely speaking, Banach spaces of functions whose norms depend merely on the size of functions (for precise definitions, see Section~\ref{sec:prel}). This class contains a large number of customary function spaces (e.g.~Lebesgue spaces, Lorentz spaces, Orlicz spaces, etc.) and together with rearrangement techniques is also well established in the study of Sobolev-type inequalities and related topics (e.g.~\citep{MR929981, MR3525407, MR1040269, MR1322313, MR1662313}), which makes it a natural choice for our purposes. The preceding discussion also suggests that Poincar\'{e}-Sobolev-type inequalities on the entire space are closely connected with optimal function norms in Sobolev embedding theorems (e.g.~\citep{ACPS, MR1406683, CPS, MR2254384}).

Our main results are contained in Section~\ref{sec:mainri}. We prove that, among other things, for each $m-$times weakly differentiable function $u$ in $\rn$ whose $m-$th order derivatives belong to a rearrangement-invariant space $X(\rn)$, there exists, under a natural assumption imposed on the space $X$, a polynomial $P$ of order at most $m-1$ such that
\begin{equation}\label{intr:poincaresobolev:eq6}
\|u-P\|_{X^m(\rn)}\leq C\|\nabla^m u\|_{X(\rn)},
\end{equation}
where $X^m(\rn)$ is another rearrangement-invariant space and where the constant $C$ is independent of $u$. The space $X^m(\rn)$ is, in fact, the optimal rearrangement-invariant space (i.e.~the space with the strongest rearrangement-invariant norm possible) that renders the inequality
\begin{equation*}
\|u\|_{X^m(\rn)}\leq C\|\nabla^m u\|_{X(\rn)}\quad\text{for each $u\in V_0^m X(\rn)$},
\end{equation*}
where $V_0^m X(\rn)$ is the space of all $m-$times weakly differentiable functions whose $m-$th order derivatives belong to $X(\rn)$ and whose derivatives up to order $m-1$ have ``some decay at infinity'', true (see \citep{M19} for more detail on this matter). In particular, if we plug $X=L^p$ for $p\in[1,n)$ into \eqref{intr:poincaresobolev:eq6} with $m=1$, it reads as
\begin{equation*}
\|u-\lambda\|_{L^{\frac{np}{n-p},p}(\rn)}\leq C\|\nabla u\|_{L^p(\rn)},
\end{equation*}
which is a substantial improvement over \eqref{intr:poincaresobolev:eq4} because the Lorentz norm $\|\cdot\|_{L^{\frac{np}{n-p},p}(\rn)}$ is strictly stronger than the Lebesgue norm $\|\cdot\|_{L^{\frac{np}{n-p}}(\rn)}$. If we plug $X=L^{n,1}$ into \eqref{intr:poincaresobolev:eq6} with $m=1$, we recover \eqref{intr:poincaresobolev:eq5}.

An immediate important consequence of our results (namely Theorem~\ref{thm:mainri} and Theorem~\ref{thm:examplesGLZ}) is that the weakly differentiable functions on $\rn$ whose gradient belong to the Lorentz space $L^{n, 1}(\rn)$ are not only continuous (more precisely, have continuous representatives) but also bounded. Surprisingly enough, whereas the remarkable result of Stein's that the weakly differentiable functions on $\rn$ whose gradient belong (locally) to the Lorentz space $L^{n, 1}(\rn)$ have continuous representatives (\citep{MR607898}) has been known for many years and is well-established, the boundedness of such functions is a quite recent result (recall that we assume that $n\geq2$). To the best of our knowledge, the first result in this direction was provided by Tartar in 1998. It follows from \citep[Theorem~8 and Remark~9]{MR1662313} that if $|\nabla u|\in L^{n, 1}(\rn)$, then $u$ is bounded on $\rn$ provided that $u$ has ``some decay at infinity'', that is, $\left|\left\{x\in\rn\colon|u(x)|>\lambda\right\}\right|<\infty$ for each $\lambda>0$. However, it was not until 2018 that Rabier proved in his recent paper \citep{MR3893783} that no assumptions on decay at infinity are, in fact, required. We recover his interesting result as a corollary of our general results by completely different means (see Remark~\ref{rem:boundedness}).

Although the class of rearrangement-invariant spaces is very rich and contains many customary function spaces, it is sometimes useful in applications to work within a narrower class of function spaces. A typical example of such a class is that of Orlicz spaces, which is an irreplaceable tool for analysing partial differential equations having a non-polynomial growth (e.g.~\citep{MR3328350, DONALDSON1971507, VUILLERMOT1982327}). This motivates Section~\ref{sec:mainorlicz}, where we provide a result similar to \eqref{intr:poincaresobolev:eq6} but the spaces on both sides of \eqref{intr:poincaresobolev:eq6} are from the narrower class of Orlicz spaces only.

Lastly, we provide a variety of particular examples of the inequality \eqref{intr:poincaresobolev:eq6} for customary function spaces in Section~\ref{sec:examples}. These examples include Lebesgue spaces, Lorentz spaces or Orlicz spaces.

\section{Preliminaries}\label{sec:prel}
Throughout this section, let $(R,\mu)$ be a $\sigma$-finite nonatomic measure space. We collect all the background material that will be used in this
paper here. We set
\[
	\M(R,\mu)= \{f\colon \text{$f$ is $\mu$-measurable function on $R$ with values in $[-\infty,\infty]$}\},
\]
\[
	\M_0(R,\mu)= \{f \in \M(R,\mu)\colon f \ \text{is finite}\ \mu\text{-a.e.\ on}\ R\}
\]
and
\[
\M_+(R,\mu)= \{f \in \M(R,\mu)\colon f \geq 0\}.
\]
The \textit{(unsigned) nonincreasing rearrangement} $f^* \colon  (0,\infty) \to [0, \infty ]$ of a function $f\in \M(R,\mu)$  is
defined as
\[
f^*(t)=\inf\{\lambda\in(0,\infty)\colon|\{s\in R\colon|f(s)|>\lambda\}|\leq t\},\ t\in(0,\infty).
\]
We also define the \textit{signed nonincreasing rearrangement} $f^\circ \colon  (0,\infty) \to [-\infty, \infty]$ of a function $f\in \M(R,\mu)$ as
\[
f^\circ(t)=\inf\{\lambda\in\R\colon|\{s\in R\colon f(s)>\lambda\}|\leq t\},\ t\in(0,\infty).
\]
The \textit{maximal nonincreasing rearrangement} $f^{**} \colon (0,\infty) \to [0, \infty ]$ of a function $f\in \M(R,\mu)$  is
defined as
\[
f^{**}(t)=\frac1t\int_0^ t f^{*}(s)\,\d s,\ t\in(0,\infty).
\]
If $|f|\leq |g|$ $\mu$-a.e.\ in $R$, then $f^*\leq g^*$.

If $(R,\mu)$ and $(S,\nu)$ are two (possibly different) $\sigma$-finite measure spaces, we say that functions $f\in \M(R,\mu)$ and $g\in\M(S,\nu)$ are \textit{equimeasurable} if $f^*=g^*$ on $(0,\infty)$. Functions $f$, $f^*$ and $f^\circ$ are equimeasurable.

A functional $\vr\colon  \M _+ (R,\mu) \to [0,\infty]$ is called a \textit{Banach function norm} if, for all $f$, $g$ and
$\{f_j\}_{j\in\N}$ in $\M_+(R,\mu)$, and every $\lambda \geq0$, the
following properties hold:
\begin{enumerate}[(P1)]
\item $\vr(f)=0$ if and only if $f=0$;
$\vr(\lambda f)= \lambda\vr(f)$; $\vr(f+g)\leq \vr(f)+ \vr(g)$ (the \textit{norm axiom});
\item $  f \le g$ a.e.\  implies $\vr(f)\le\vr(g)$ (the \textit{lattice axiom});
\item $  f_j \nearrow f$ a.e.\ implies
$\vr(f_j) \nearrow \vr(f)$ (the \textit{Fatou axiom});
\item $\vr(\chi_E)<\infty$ \ for every $E\subseteq R$ of finite measure (the \textit{nontriviality axiom});
\item  if $E$ is a subset of $R$ of finite measure, then $\int_{E} f\,\d\mu \le C_E
\vr(f)$ for a positive constant $C_E$, depending possibly on $E$ and $\vr$ but independent of $f$ (the \textit{local embedding in $L^1$}).
\end{enumerate}
If, in addition, $\vr$ satisfies
\begin{itemize}
\item[(P6)] $\vr(f) = \vr(g)$ whenever
$f^* = g^ *$(the \textit{rearrangement-invariance axiom}),
\end{itemize}
then we say that $\vr$ is a
\textit{rearrangement-invariant norm}.

If $\vr$ is a rearrangement-invariant norm, then the collection
\[
X=X({\vr})=\{f\in\M(R,\mu)\colon \vr(|f|)<\infty\}
\]
is called a~\textit{rearrangement-invariant space}, sometimes we shortly write just an~\textit{r.i.~space}, corresponding to the norm $\vr$. We shall write $\|f\|_{X}$ instead of $\vr(|f|)$. Note that the quantity $\|f\|_{X}$ is defined for every $f\in\M(R,\mu)$, and
\[
f\in X\quad\Leftrightarrow\quad\|f\|_X<\infty.
\]

Every rearrangement-invariant space $X$ satisfies
\begin{equation}\label{prel:vnorenidosouctu}
X\subseteq L^1+L^\infty\subseteq \M_0(R,\mu),
\end{equation}
that is, each function from $X$ can be written as a sum of a $(R,\mu)-$integrable function and a $(R,\mu)-$essentially bounded function, and is, in particular, finite $\mu-$a.e.~(see~\citep[Chapter~1, Theorem~1.4, Chapter~2, Theorem~6.6]{BS}).

With any rearrangement-invariant function norm $\vr$, there is associated another functional, $\vr'$, defined for $g \in  \M_+(R,\mu)$ as
\[
\vr'(g)=\sup\left\{\int_{R} fg\,\d\mu\colon f\in\M_+(R,\mu),\ \vr(f)\leq 1\right\}.
\]
It turns out that  $\vr'$ is also a
rearrangement-invariant norm, which is called
the~\textit{associate norm} of $\vr$. Moreover, for every rearrangement-invariant norm $\vr$ and every $f\in\Mpl(R,\mu)$, we have (see~\citep[Chapter~1, Theorem~2.9]{BS})
\[
\vr(f)=\sup\left\{\int_{R}fg\,\d\mu\colon g\in\M_+(R,\mu),\ \vr'(f)\leq 1\right\}.
\]

If $\vr$ is a~rearrangement-invariant norm, $X=X({\vr})$ is the rearrangement-invariant space determined by $\vr$, and $\vr'$ is the associate norm of $\vr$, then the function space $X({\vr'})$ determined by $\vr'$ is called the \textit{associate space} of $X$ and is denoted by $X'$. We always have $(X')'=X$ (see~\citep[Chapter~1, Theorem~2.7]{BS}), and we shall write $X''$ instead of $(X')'$. Furthermore, the \textit{H\"older inequality}
\begin{equation}\label{prel:holder}
\int_{R}|fg|\,\d \mu\leq\|f\|_{X}\|g\|_{X'}
\end{equation}
holds for every $f,g\in \M(R,\mu)$.

%

For every rearrangement-invariant space $X$ over the measure space $(R,\mu)$, there exists a~unique rearran\-gement-invariant space $X(0,\mu(R))$ over the interval $(0,\mu(R))$ endowed with the one-dimensional Lebesgue measure such that $\|f\|_X=\|f^*\|_{X(0,\mu(R))}$ (see \citep[Chapter~2, Theorem~4.10]{BS}). This space is called the~\textit{representation space} of $X$. Throughout this paper, the representation space of a rearrangement-invariant space $X$ will be denoted by $X(0,\mu(R))$. When $R=(0,\infty)$ and $\mu$ is the Lebesgue measure, every rearrangement-invariant space $X$ over $(R,\mu)$ coincides with its representation space.

If $\vr$ is a~rearrangement-invariant norm and $X=X({\vr})$ is the
rearrangement-invariant space determined by $\vr$, we define its
\textit{fundamental function}, $\varphi_X$, by
\begin{equation*}
\varphi_X(t)=\varrho(\chi_E),\ t\in[0,\mu(R)),
\end{equation*}
where $E\subseteq R$ is such that $\mu(E)=t$. The property (P6) of rearrangement-invariant norms and the fact that $\chi_E^* = \chi_{(0, \mu(E))}$ guarantee that
the fundamental function is well defined. Moreover, one has
\begin{equation}\label{E:fundamental-relation}
	\varphi_X(t)\varphi_{X'}(t)=t
	\qquad \textup{for every}\ t\in[0,\mu(R)).
\end{equation}

Basic examples of function norms are those associated with the standard
Lebesgue spaces $L^ p$. For $p\in(0,\infty]$, we define the functional $\vr_p$
by
\[
\vr_p(f)=\|f\|_p=
\begin{cases}
\left(\int_Rf^ p\,\d\mu\right)^{\frac1p}, &\quad0<p<\infty,\\
\esssup_{R}f,&\quad p=\infty,
\end{cases}
\]
for $f \in \M_+(R,\mu)$. If $p\in[1,\infty]$, then $\vr_p$ is a rearrangement-invariant function norm.

If $0< p,q\le\infty$, we define the functional
$\vr_{p,q}$ by
\[
\vr_{p,q}(f)=\|f\|_{p,q}=
\left\|s^{\frac{1}{p}-\frac{1}{q}}f^*(s)\right\|_{q}
\]
for $f \in \M_+(R,\mu)$. The set $L^{p,q}$, defined as the collection of all $f\in\M(R,\mu)$ satisfying $\vr_{p,q}(|f|)<\infty$, is called a~\textit{Lorentz space}. If $1<p<\infty$ and $1\leq q\leq\infty$, or $p=q=1$, or $p=q=\infty$, then $\vr_{p,q}$ is equivalent to a~rearrangement-invariant function norm in the sense that there exists a~rearrangement-invariant norm $\sigma$ and a~constant $C$, $0<C<\infty$, depending on $p,q$ but independent of $f$, such that
\[
C^{-1}\sigma(f)\leq \vr_{p,q}(f)\leq C\sigma(f).
\]
As a~consequence, $L^{p,q}$ is considered to be a~rearrangement-invariant space for the above specified cases of $p,q$ (see~\citep[Chapter~4]{BS}). If either $0<p<1$ or $p=1$ and $q>1$, then $L^{p,q}$ is a~quasi-normed space. If $p=\infty$ and $q<\infty$, then $L^{p,q}=\{0\}$. For every $p\in[1,\infty]$, we have $L^{p,p}=L^{p}$. Furthermore, if $p,q,r\in(0,\infty]$ and $q\leq r$, then the inclusion $L^{p,q}\subset L^{p,r}$ holds.

If $\A=[\alpha_0,\alpha_{\infty}]\in\R^ 2$ and $t\in\R$, then we shall use the notation $\A+t=[\alpha_0+t,\alpha_{\infty}+t]$ and $t\A=[t\alpha_0, t\alpha_{\infty}]$.

Let $0<p,q\le\infty$, $\A=[\alpha_0,\alpha_{\infty}]\in\R^ 2$ and $\B=[\beta_0,\beta_{\infty}]\in\R^ 2$. Then we define the
functionals $\vr_{p,q;\A}$ and $\vr_{p,q;\A,\B}$ on $\M_+(R,\mu)$ as
\[
\vr_{p,q;\A}(f)=
\left\|t^{\frac{1}{p}-\frac{1}{q}}\ell^{\A}(t) f^*(t)\right\|_{L^ q(0,\infty)}
\]
and
\[
\vr_{p,q;\A,\B}(f)=
\left\|t^{\frac{1}{p}-\frac{1}{q}}\ell^{\A}(t)\ell\ell^{\B}(t) f^*(t)\right\|_{L^ q(0,\infty)},
\]
where
\[
\ell^{\A}(t)=
\begin{cases}
(1-\log t)^{\alpha_0}, &\quad t\in(0,1),\\
(1+\log t)^{\alpha_{\infty}}, &\quad t\in[1,\infty),
\end{cases}
\]
and
\[
\ell\ell^{\B}(t)=
\begin{cases}
(1+\log (1-\log t))^{\beta_0}, &\quad t\in(0,1),\\
(1+\log (1+\log t))^{\beta_{\infty}}, &\quad t\in[1,\infty).
\end{cases}
\]
The sets $L^{p,q;\A}$ and $L^{p,q;\A,\B}$, defined as the collections of all $f\in\M(R,\mu)$ satisfying $\vr_{p,q;\A}(|f|)<\infty$ and $\vr_{p,q;\A,\B}(|f|)<\infty$, respectively, are called \textit{Lorentz--Zygmund spaces}. The functions of the form $\ell^{\A}$, $\ell\ell^{\B}$ are called \textit{broken logarithmic functions}. It can be shown (\citep[Theorem~7.1]{OP}) that the functional $\vr_{p,q;\A}$ is equivalent to a rearrangement-invariant function norm if and only if
\begin{equation*}
\begin{cases}
&p=q=1,\ \alpha_0\geq0,\ \alpha_\infty\leq0\ \text{or}\\
&p\in(1,\infty)\ \text{or}\\
&p=\infty,\ q\in[1,\infty),\ \alpha_0 + \frac1{q} < 0\ \text{or}\\
&p=q=\infty,\ \alpha_0\leq0.
\end{cases}
\end{equation*}

The spaces of this type provide a common roof for many customary spaces. These include not only Lebesgue spaces and Lorentz spaces, by taking $\A=[0,0]$, but also all types of exponential and logarithmic Zygmund classes, and also the spaces discovered independently by Maz'ya (in a~somewhat implicit form involving capacitary estimates~\citep[pp.~105 and~109]{Mabook}), Hansson~\citep{Ha} and Br\'ezis--Wainger~\citep{BW}, who used it to describe the sharp target space in a limiting Sobolev embedding (the spaces can be also traced in the works of Brudnyi~\citep{B} and, in a more general setting, Cwikel and Pustylnik~\citep{CP}). One of the benefits of using broken logarithmic functions consists in the fact that the underlying measure space can be considered to have either finite or infinite measure. For the detailed study of Lorentz--Zygmund spaces we refer the reader to~\citep{OP}. In some examples presented in this paper we shall need more than two layers of logarithms. Such spaces are defined as straightforward extensions of the spaces defined above.


Another very important class of rearrangement-invariant spaces is the class of \emph{Orlicz spaces}. A convex, left-continuous function $A\colon [0, \infty ) \to [0, \infty ]$, neither identically zero nor infinity on $(0,\infty)$, vanishing
at $0$ is called a \emph{Young function}. Hence any Young function can be expressed in the form
\begin{equation}\label{pre:integralformofYoungfunction}
A(t) = \int _0^t a(s ) \d s \qquad \quad \text{for $t \geq 0$},
\end{equation}
 for some nondecreasing, left-continuous function $a\colon [0, \infty )\to
[0, \infty ]$. Given a Young function $A$ we define the \emph{Luxemburg norm} $\|\cdot \|_{L^A}$ as
\begin{equation*}
\|f\|_{L^A}= \inf \left\{ \lambda >0\colon\int_R A\left(
\frac{f(x)}{\lambda} \right) \d\mu(x) \leq 1 \right\},\ f \in {\Mpl(R,\mu)}.
\end{equation*}
The corresponding rearrangement-invariant space $L^A$ is called an Orlicz space. In particular,
$L^A= L^p$ if $A(t)= t^p$ when $p \in [1, \infty )$, and $L^A= L^\infty$ if $A(t)=0$ for $t\in [0, 1]$ and
$A(t) = \infty$ for $t>1$. We refer the interested reader to \citep{MR0126722, MR1113700} for more details on Orlicz spaces.

%
We say that a Young function $A$ \emph{dominates} a Young function $B$ \emph{near zero} or \emph{near infinity} if there exist positive constants $c$ and $t_0$ such that
\begin{equation*}
B(t)\leq A(ct)\quad\text{for all $t\in[0,t_0]$ or for all $t\in[t_0,\infty)$, respectively.}
\end{equation*}
We say that two Young functions $A$ and $B$ are \emph{equivalent near zero} or \emph{near infinity} if they dominate each other near zero or near infinity, respectively.

If, for $F\in\Mpl(0,\infty)$, there exists $t_0>0$ such that $\int_0^{t_0}F(s)\,\d s<\infty$ or $\int_{t_0}^\infty F(s)\,\d s<\infty$, respectively, we shortly write that
\begin{equation*}
\int_0 F(s)\,\d s<\infty\text{ or }\int^\infty F(s)\,\d s<\infty\text{, respectively.}
\end{equation*}
It there does not exist such a $t_0>0$, we write
\begin{equation*}
\int_0 F(s)\,\d s=\infty\text{ or }\int^\infty F(s)\,\d s=\infty\text{, respectively.}
\end{equation*}


A common extension of  Orlicz and Lorentz spaces is provided by the
family of \emph{Orlicz-Lorentz spaces}. Given $p\in (1, \infty)$, $q\in
[1, \infty )$ and  a Young function $A$ such that
\begin{equation}\label{pre:conditiononAforOrliczLorentz}
\int^\infty \frac{A(t)}{t^{1+p}}\,\d t < \infty,
\end{equation}
we denote by $\|\cdot \|_{L(p,q,A)}$ the Orlicz-Lorentz
rearrangement-invariant function norm defined as
\begin{equation}\label{pre:orliczlorentznormdef}
\|f\|_{L(p,q,A)} = \left\|t^{-\frac1{p}} f^*(t^{\frac1{q}})\right\|_{L^A(0, \mu(R))},\quad f \in\Mpl(R,\mu).
\end{equation}
The fact that \eqref{pre:orliczlorentznormdef} actually defines  a  rearrangement-invariant function norm follows from simple variants in the proof
of \citep[Proposition 2.1]{MR2073127}. We denote by $L(p,q,A)$ the Orlicz-Lorentz space associated with the rearrangement-invariant function norm $\|\cdot
\|_{L(p,q, A)}$. Note that the class of Orlicz-Lorentz spaces includes (up to equivalent norms) the Orlicz spaces and various instances of Lorentz and Lorentz-Zygmund spaces.

In what follows we shortly denote the Lebesgue measure of a measurable set $E\subseteq\rn$ by $|E|$.

If $X$ is a rearrangement-invariant space over $\rn$ and $G\subseteq\rn$ is a measurable set, we denote by $X(G)$ the rearrangement-invariant space over $G$ corresponding to the rearrangement-invariant norm
\begin{equation*}
\rho(f) =  \|\tilde{f}\|_X,\ f\in\M_+(G),
\end{equation*}
where $\tilde{f}$ is the continuation of $f$ by $0$ to $\rn$.

Let $G\subseteq\rn$ be an open set. We shall work with \emph{Sobolev-type spaces built upon rearrangement-invariant spaces}. If $m\in\N$ and $u$ is a $m$-times weakly differentiable function on $G$, we denote by $\nabla^k u$, for $k\in\{0, 1,\dots, m\}$, the vector of all $k-$th order weak derivatives of $u$ on $G$, where $\nabla^0 u = u$. If $X(G)$ is a rearrangement-invariant space over $G$, we define the space $V^m X(G)$ by
\begin{equation*}
V^m X(G) =\{u\in\L1loc(G)\colon\text{$u$ is $m$-times weakly differentiable on $G$ and }|\nabla^m u|\in X(G)\}.
\end{equation*}
We stress the fact that, for a function from $V^m X(G)$, only its $m$-th order derivatives are required to be elements of $X(G)$, whereas there are no assumptions imposed on its derivatives of lower orders. The derivatives of lower orders are not required to have any regularity, we merely assume that they exist. We also write $\|\nabla^k u\|_X$ instead of $\||\nabla^k u|\|_X$ for the sake of brevity, where $|\nabla^k u|$ is the $\ell^1$-norm of the vector $\nabla^k u$.

We have that
\begin{equation}\label{prel:sobfcejsouintegrovatelnenakoulich}
V^m X(B)\subseteq L^1(B)\quad\text{for each ball in $\rn$}
\end{equation}
by \citep[Theorem~5.2.3]{Mabook}; hence the integral mean $\frac1{|B|}\int_B u(x)\d x$ of $u\in V^m X(B)$ is a well-defined finite number for each ball in $\rn$.

Throughout the paper the convention that $\frac1{\infty}=0$ and $0\cdot \infty=0$ is used without further explicit reference. If $A$ is a Young function that is equal to zero on $[0,t_0]$, then we interpret $\frac1{A(t)}$ as $\infty$ for $t\in[0,t_0]$. We write $P\lesssim Q$ when $P\leq \text{constant}\cdot Q$ where the constant is independent of appropriate quantities appearing in expressions $P$ and $Q$. Similarly, we write $P\gtrsim Q$ with the obvious meaning. We also write $P\approx Q$ when $P\lesssim Q$ and $P\gtrsim Q$ simultaneously.


\section{Poincar\'{e}-Sobolev inequalities in rearrangement-invariant spaces}\label{sec:mainri}
The following theorem is the main result of this paper.
\begin{theorem}\label{thm:mainri}
Let $X$ be a rearrangement-invariant space over $\rn$ and $m\in\N$, $m < n$. Assume that $t^{\frac{m}{n} - 1}\chi_{(1,\infty)}(t)\in X'(0,\infty)$. Then $X^m=X^m(\sigma_m')$, where the rearrangement-invariant norm $\sigma_m$ is defined by
\begin{equation*}
\sigma_m(f)=\|t^\frac{m}{n}f^{**}(t)\|_{X'(0,\infty)},\ f\in\Mpl(\rn),
\end{equation*}
is a rearrangement-invariant space over $\rn$ and there exists a positive constant $C$, depending on $m$ and on the dimension $n$ only, such that for each $u\in V^mX(\rn)$ there exists a polynomial $P$ of order at most $m-1$, depending on $u$, that renders the inequality
\begin{equation}\label{thm:mainri:eq}
\|u-P\|_{X^m}\leq C\|\nabla^m u\|_X
\end{equation}
true. Furthermore, if $P$ and $\widetilde{P}$ are such polynomials, then $P - \widetilde{P}$ is a constant polynomial.

Moreover, if $\lim\limits_{t\rightarrow\infty}\varphi_{X^m}(t)=\infty$, then these polynomials are unique.
\end{theorem}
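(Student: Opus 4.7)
The proof naturally breaks into three parts: (I) verifying that $\sigma_m'$ defines a rearrangement-invariant norm, so that $X^m$ is a valid r.i.~space over $\rn$; (II) constructing the polynomial $P$ and establishing (\ref{thm:mainri:eq}); (III) settling the two uniqueness statements.

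For (I), being the associate norm of $\sigma_m$, the functional $\sigma_m'$ automatically satisfies all defining axioms of a rearrangement-invariant norm except possibly the nontriviality axiom (P4). By the fundamental-function identity (\ref{E:fundamental-relation}), (P4) is equivalent to $\varphi_{X^{m\prime}}(t) = \sigma_m(\chi_{(0,t)}) < \infty$ for every $t \in (0,\infty)$. Computing $(\chi_{(0,t)})^{**}(s) = \min\{1, t/s\}$ and splitting at $s = t$, the first piece is bounded with support of finite measure, so it lies in $X'(0,\infty)$ by (P4) applied to $X'$; the second piece is $t \cdot s^{m/n - 1}\chi_{(t,\infty)}(s)$, which lies in $X'$ by a dilation of the hypothesis $s^{m/n - 1}\chi_{(1,\infty)} \in X'(0,\infty)$.

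For (II), I would invoke a local Poincar\'e-Sobolev inequality with polynomial subtraction, in the spirit of the author's companion paper~\citep{M19}: for each ball $B_R\subseteq\rn$ there is a polynomial $P_R$ of degree at most $m-1$ with
$$\|u - P_R\|_{X^m(B_R)} \leq C\|\nabla^m u\|_{X(B_R)} \leq C\|\nabla^m u\|_X,$$
where $C$ is independent of $R$ by the scale-balance inherent in the definition of the optimal target $X^m$. To collapse this $R$-dependent family into a single global polynomial, fix a bounded linear projection $\Pi\colon L^1(B_1)\to\mathcal{P}_{m-1}$ onto the space $\mathcal{P}_{m-1}$ of polynomials of degree at most $m-1$ (for instance the $L^2$-orthogonal projection, which maps $L^1(B_1)$ boundedly into $L^\infty(B_1)$ via its bounded polynomial kernel). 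Since $\Pi$ is the identity on $\mathcal{P}_{m-1}$, one has $P_R - \Pi u = \Pi(P_R - u)$, whence, using also the local embedding (P5) of $X^m$ into $L^1(B_1)$ and the lattice axiom (P2) applied with $B_1\subseteq B_R$,
$$\|P_R - \Pi u\|_{L^\infty(B_1)} \leq C_1\|u - P_R\|_{L^1(B_1)} \leq C_2\|u - P_R\|_{X^m(B_R)} \leq C_3\|\nabla^m u\|_X,$$
uniformly in $R$. By compactness in the finite-dimensional space $\mathcal{P}_{m-1}$, one extracts a sequence $R_k\to\infty$ along which $P_{R_k}\to P$ uniformly on compacta, for some $P\in\mathcal{P}_{m-1}$. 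For any fixed ball $B_S$ and every $R_k\geq S$, the bound $\|u - P_{R_k}\|_{X^m(B_S)}\leq C\|\nabla^m u\|_X$ together with pointwise convergence $u - P_{R_k}\to u - P$ on $B_S$ yields, via the Fatou property (P3) (in the standard form $\rho(f)\leq\liminf\rho(f_j)$ for a.e.~convergent sequences), $\|u-P\|_{X^m(B_S)}\leq C\|\nabla^m u\|_X$; a final pass with (P3) as $S\to\infty$ delivers (\ref{thm:mainri:eq}).

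For (III), if $P$ and $\widetilde{P}$ both render (\ref{thm:mainri:eq}) true, then $Q := P - \widetilde{P}\in\mathcal{P}_{m-1}\cap X^m(\rn)$. But any non-constant polynomial on $\rn$ has superlevel sets of infinite Lebesgue measure, so its nonincreasing rearrangement is identically $+\infty$; hence $Q$ must be a constant. Under the additional hypothesis $\lim_{t\to\infty}\varphi_{X^m}(t)=\infty$, one has $\|c\|_{X^m(\rn)}=|c|\lim_{t\to\infty}\varphi_{X^m}(t)=\infty$ unless $c=0$, forcing $Q = 0$ and giving full uniqueness of $P$. The principal obstacle in the plan is the compactness step in (II): the local approximants $P_R$ genuinely depend on $R$, and without first anchoring them through the fixed projection $\Pi$ on $B_1$ one has no uniform coefficient control, and hence no way to extract a global limit polynomial; the anchoring, the local-to-r.i.~embedding (P5), and the finite-dimensionality of $\mathcal{P}_{m-1}$ are what conspire to make the argument work.
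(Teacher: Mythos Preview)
Your proposal is correct and takes a genuinely different route from the paper's proof. The paper argues by induction on $m$: for $m=1$ it uses a Riesz-potential estimate (Proposition~\ref{prop:existenceprumeru}) to show that the integral means $\bar u_{B_k}$ over an increasing family of balls form a bounded sequence, extracts a subsequential limit $\lambda$, applies the first-order Poincar\'e inequality of Theorem~\ref{thm:poincare} on each $B_k$, and passes to the limit by Fatou; the inductive step applies the $m=1$ case to every $(m-1)$-th order partial derivative, subtracts the resulting homogeneous polynomial of degree $m-1$, and invokes the reduction $(X^1)^{m-1}\approx X^m$ from~\citep{M19}.

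Your argument sidesteps both the induction and the potential estimate. Anchoring the local approximants $P_R$ through a fixed projection $\Pi$ on $B_1$, combined with the local $L^1$-embedding (P5) of $X^m$, gives uniform coefficient control directly, and finite-dimensionality of $\mathcal P_{m-1}$ does the rest. Specialised to $m=1$, your projection step reads $|\bar u_{B_R}-\bar u_{B_1}|\lesssim\|u-\bar u_{B_R}\|_{L^1(B_1)}\lesssim\|u-\bar u_{B_R}\|_{X^1(B_R)}\lesssim\|\nabla u\|_X$, which is a cleaner route to the boundedness of means than Proposition~\ref{prop:existenceprumeru}. The one soft spot in your plan is the higher-order local Poincar\'e inequality on balls with constant independent of $R$: the paper proves only the first-order case (Theorem~\ref{thm:poincare}) and explicitly notes that the existing general versions in the literature do not carry the required uniformity of constants. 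You would have to establish the higher-order version, most naturally by iterating Theorem~\ref{thm:poincare} through the chain $X\to X^1\to\cdots\to X^m$ together with $(X^1)^{m-1}\approx X^m$ --- but that iteration is essentially the paper's induction, repackaged. Your uniqueness argument is the same in substance as the paper's, which phrases it via $X^m\subseteq L^1+L^\infty$.
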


\begin{remark}\label{rem:mainrivysvetlenipodminky}
For example, the condition $t^{\frac{m}{n} - 1}\chi_{(1,\infty)}(t)\in X'(0,\infty)$ is satisfied for $X=L^p$ when $p\in[1,\frac{n}{m})$, and for $X=L^{\frac{n}{m},1}$.

Furthermore, the condition $t^{\frac{m}{n} - 1}\chi_{(1,\infty)}(t)\in X'(0,\infty)$ is precisely the condition that ensures the existence of the optimal rearrangement-invariant norm $\|\cdot\|_{Y}$ in the inequality
\begin{equation}\label{rem:optimalniprostor:eq}
\|u\|_{Y}\leq C\|\nabla^m u\|_X\quad\text{for each $u\in V_0^mX(\rn)$},
\end{equation}
where $V_0^mX(\rn)$ is the space of those functions $u$ from $V^mX(\rn)$ that satisfy the conditions $|\{x\in\rn\colon|\nabla^ku(x)|>\lambda\}|<\infty$ for each $\lambda>0$ and each $k\in\{0,1,\dots,m-1\}$. More precisely, it was proved in \citep[Theorem~3.1]{M19} that if $t^{\frac{m}{n} - 1}\chi_{(1,\infty)}(t)\in X'(0,\infty)$, then 
\begin{equation*}
\|u\|_{X^m}\leq C\|\nabla^m u\|_X\quad\text{for each $u\in V_0^mX(\rn)$},
\end{equation*}
where the rearrangement-invariant space $X^m$ is defined as in the theorem above, and the space $X^m$ is the optimal (i.e.~the smallest) possible rearrangement-invariant space that renders \eqref{rem:optimalniprostor:eq} true, that is, if \eqref{rem:optimalniprostor:eq} is satisfied for a rearrangement-invariant space $Y$, then $X^m\subseteq Y$. On the other hand, if $t^{\frac{m}{n} - 1}\chi_{(1,\infty)}(t)\notin X'(0,\infty)$, then there exist no rearrangement-invariant spaces $Y$ rendering \eqref{rem:optimalniprostor:eq} true.

In light of this remark, the condition $t^{\frac{m}{n} - 1}\chi_{(1,\infty)}(t)\in X'(0,\infty)$ imposed in Theorem~\ref{thm:mainri} is in fact natural and not restrictive.
\end{remark}

Before we prove Theorem~\ref{thm:mainri}, we need to establish a few results of independent interest. We start with an inequality that can be viewed as a Poincar\'{e}-Sobolev-type inequality in rearrangement-invariant spaces. We note that even though a general version of a Poincar\'{e}-Sobolev-type inequality in rearrangement-invariant spaces was established in \citep[Lemma~4.2]{MR3237035} (cf.~\citep[Lemma~4.2]{CP-ARK}), their version is not sufficient for our purposes because we need better control over the multiplicative constant appearing there. Our proof is inspired by \citep[Theorem~3.1]{MR2514054}.
\begin{theorem}\label{thm:poincare}
Let $X$ and $Y$ be rearrangement-invariant spaces over $\rn$. Assume that there exists a positive constant $C_1$ such that
\begin{equation}\label{thm:poincare:ineq}
\left\|\int_t^\infty f(s)s^{\frac{1}{n} - 1}\, \d s\right\|_{Y(0,\infty)}\leq C_1 \|f\|_{X(0,\infty)}\quad\text{for each $f\in\Mpl(0,\infty)$}.
\end{equation}
Then there exists a positive constant $C_2$, which depends on $C_1$ and on the dimension $n$ only, such that
\begin{equation*}
\|u - \bar{u}_B\|_{Y(B)}\leq C_2\|\nabla u\|_{X(B)}\quad\text{for each ball}\ B\subseteq\rn\ \text{and each}\ u\in V^1X(B),
\end{equation*}
where $\bar{u}_B$ is the integral mean of $u$ over $B$, that is, $\bar{u}_B = \frac1{|B|}\int_B u(x)\,\d x$.
\end{theorem}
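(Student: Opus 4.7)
The plan is to reduce the theorem to a single pointwise rearrangement inequality on $(0,\infty)$ and then invoke the hypothesis \eqref{thm:poincare:ineq} in one swoop. The aim of the main step is the pointwise bound
\begin{equation*}
(u - \bar{u}_B)^*(t) \le C(n) \int_t^{\infty} (|\nabla u|\chi_B)^*(s)\, s^{\frac{1}{n} - 1}\, \d s \quad \text{for all } t > 0,
\end{equation*}
valid for every ball $B\subseteq\rn$ and every $u\in V^1 X(B)$, with a constant $C(n)$ depending only on $n$. Here $\bar{u}_B$ is finite by \eqref{prel:sobfcejsouintegrovatelnenakoulich}, and $(u-\bar{u}_B)^*$ is supported on $[0,|B|]$, so the inequality is only nontrivial on $(0,|B|)$. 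Granted this bound, the theorem is immediate: by the rearrangement invariance of $Y$ one has $\|u-\bar{u}_B\|_{Y(B)} = \|(u-\bar{u}_B)^*\|_{Y(0,\infty)}$, and applying $\|\cdot\|_{Y(0,\infty)}$ to both sides of the pointwise bound, invoking \eqref{thm:poincare:ineq} with $f = (|\nabla u|\chi_B)^*$, and using that $\|(|\nabla u|\chi_B)^*\|_{X(0,\infty)} = \|\nabla u\|_{X(B)}$ by the very definition of $X(B)$ via its representation space, yields the conclusion with $C_2 = C(n) C_1$.

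The main obstacle lies in establishing the pointwise rearrangement bound with a constant depending only on $n$ and not on the radius of $B$---precisely the refinement needed beyond the version in \citep[Lemma~4.2]{MR3237035} that the author recalls. The strategy is to combine two classical, scale-invariant ingredients on $B$. The first is a Maz'ya-type truncation estimate based on the coarea formula and the Euclidean isoperimetric inequality: for $0 < t < s < |B|$,
\begin{equation*}
u^\circ(t) - u^\circ(s) \le C(n) \int_t^{s} (|\nabla u|\chi_B)^*(\tau)\, \tau^{\frac{1}{n} - 1}\,\d\tau,
\end{equation*}
with the analogous estimate for $-u$ handling the symmetric range; choosing $s = |B|/2$ controls the distance from $u$ to the median-like value $u^\circ(|B|/2)$ by the Hardy-type integral appearing above. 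The second ingredient is a classical $L^1$ Poincar\'e inequality on $B$ written via rearrangements, which controls $|u^\circ(|B|/2) - \bar{u}_B|$ by the same integral $\int_0^{|B|} (|\nabla u|\chi_B)^*(\tau)\,\tau^{1/n-1}\,\d\tau$, again with a constant depending only on $n$. Adding the two pieces produces the required pointwise bound on $(0,|B|)$, and the bound is trivial for $t\ge|B|$.

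The reason the overall constant depends only on $n$---which is what makes the whole approach work---is the dilation-invariance of the factor $\tau^{1/n-1}$: the radius of $B$ enters neither of the two ingredients above, appearing only as an upper integration limit that disappears once the integrand is extended by zero beyond $|B|$. This uniformity in $B$ is exactly what will enable Theorem~\ref{thm:poincare} to be used in the proof of Theorem~\ref{thm:mainri} by letting the radius of $B$ tend to infinity.
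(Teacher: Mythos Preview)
Your reduction strategy is sound, but the key pointwise inequality you claim,
\[
u^\circ(t)-u^\circ(s)\le C(n)\int_t^{s}(|\nabla u|\chi_B)^*(\tau)\,\tau^{\frac1n-1}\,\d\tau,
\]
is \emph{false} as stated. What coarea plus the (relative) isoperimetric inequality actually yield is
\[
u^\circ(t)-u^\circ(s)\le C(n)\int_t^{s}\tau^{\frac1n-1}\,\d\psi(\tau),\qquad
\psi(\tau)=\int_{\{u>u^\circ(\tau)\}}|\nabla u|,
\]
and one only knows $\psi(\tau)\le\int_0^\tau(|\nabla u|)^*$, not $\psi'(\tau)\le(|\nabla u|)^*(\tau)$. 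The Hardy--Littlewood--P\'olya trick (integration by parts against the decreasing weight $\tau^{1/n-1}$) lets you replace $\d\psi$ by $(|\nabla u|)^*\,\d\tau$ when integrating from $0$, but on $(t,s)$ with $t>0$ an uncontrolled boundary term $t^{1/n-1}\bigl(\int_0^t(|\nabla u|)^*-\psi(t)\bigr)$ survives. A concrete counterexample: on $B=B_1\subset\rn$ let $u=1$ on $B_{1/2}$, drop linearly to $0$ across a shell of width $\varepsilon$, and be $0$ outside. Then $(|\nabla u|)^*$ is supported on an interval of length $\approx\varepsilon$, while $u^\circ(t)-u^\circ(|B|/2)=1$ for all $t$ between that length and $|B_{1/2}|$; your right-hand side vanishes there. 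The same obstruction kills the pointwise bound on $(u-\bar u_B)^*$ you want to feed into \eqref{thm:poincare:ineq}.

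The paper avoids this trap by never passing to $(|\nabla u|)^*$ pointwise. It works at the norm level throughout: after reducing to $\gamma=u^\circ(|B|/2)$, it applies \eqref{thm:poincare:ineq} with the \emph{non-rearranged} integrand $f(s)=-\tfrac{\d u^\circ}{\d s}(s)\,s^{1-1/n}$ (on each half of $(0,|B|)$, the upper half handled by the measure-preserving reflection $s\mapsto|B|-s$), and then invokes the norm inequality $\|-\tfrac{\d u^\circ}{\d t}\,h_B\|_{X(0,|B|)}\le\|\nabla u\|_{X(B)}$ from \citep[Lemma~4.1]{CP-ARK}. That lemma is precisely the r.i.\ substitute for the pointwise comparison you are missing. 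If you want to salvage your outline, replace $(|\nabla u|\chi_B)^*(\tau)$ by $-\tfrac{\d u^\circ}{\d\tau}(\tau)\,\tau^{1-1/n}$ in your Hardy integral and quote that lemma; the argument then essentially converges to the paper's.
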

\begin{proof}
Note that $\bar{u}_B$ is a well-defined finite number by \eqref{prel:sobfcejsouintegrovatelnenakoulich}. Since for any real number $\gamma$ and a.e.~$x\in B$ we have, by the H\"older inequality \eqref{prel:holder}, that
\begin{equation*}
|u(x) - \bar{u}_B|\leq |u(x) - \gamma| + \frac1{|B|}\int_B |u(y) - \gamma|\,\d y\leq |u(x) - \gamma| + \frac1{|B|}\|u -\gamma\|_{Y(B)}\|1\|_{Y'(B)},
\end{equation*}
it follows from this estimate and \eqref{E:fundamental-relation} that
\begin{equation}\label{thm:poincare:eq1}
\|u - \bar{u}_B\|_{Y(B)}\leq 2\|u-\gamma\|_{Y(B)}
\end{equation}
for each $\gamma\in\R$. Since $u^\circ$ is locally absolutely continuous (\citep[Lemma~6.6]{CEG}), we have that
\begin{equation}\label{thm:poincare:eq2}
\|u-u^\circ \left(\frac{|B|}{2}\right)\|_{Y(B)} = \|u^\circ-u^\circ \left(\frac{|B|}{2}\right)\|_{Y(0, |B|)} =\|\int_t^\frac{|B|}{2}-\frac{\d u^\circ}{\d s}(s)\,\d s\|_{Y(0, |B|)}.
\end{equation}
Hence, by virtue of \eqref{thm:poincare:eq1} and \eqref{thm:poincare:eq2}, it is sufficient to prove that
\begin{equation}\label{thm:poincare:eq3}
\|\int_t^\frac{|B|}{2}-\frac{\d u^\circ}{\d s}(s)\,\d s\|_{Y(B)}\lesssim\|\nabla u\|_{X(B)}
\end{equation}
with a constant that depends only on $C_1$ and on $n$. Since \eqref{thm:poincare:ineq} is in force, we have that
\begin{equation}\label{thm:poincare:eq4}
\begin{split}
\|\chi_{(0,\frac{|B|}{2})}(t)\int_t^\frac{|B|}{2}-\frac{\d u^\circ}{\d s}(s)\,\d s\|_{Y(0, |B|)} &= \|\chi_{(0,\frac{|B|}{2})}(t)\int_t^\frac{|B|}{2}-\frac{\d u^\circ}{\d s}(s)\chi_{(0,\frac{|B|}{2})}(s)\,\d s\|_{Y(0, |B|)}\\
&\leq\|\int_t^\infty-\frac{\d u^\circ}{\d s}(s)\chi_{(0,\frac{|B|}{2})}(s)\,\d s\|_{Y(0, \infty)}\\
&\lesssim\|-\frac{\d u^\circ}{\d t}(t)t^{1-\frac{1}{n}}\chi_{(0,\frac{|B|}{2})}(t)\|_{X(0,\infty)}.
\end{split}
\end{equation}
It is well known (e.g.~\citep[Theorem~5.4.3]{ziemerweakly}) that the isoperimetric function $h_B$ of a ball $B\subseteq\rn$ satisfies
\begin{equation*}
h_B(s)\gtrsim\min\{s, |B| - s\}^{1-\frac{1}{n}}\quad\text{for each}\ s\in(0, |B|)
\end{equation*}
with a constant that depends only on the dimension $n$. Hence
\begin{equation}\label{thm:poincare:eq5}
\|-\frac{\d u^\circ}{\d t}(t)t^{1-\frac{1}{n}}\chi_{(0,\frac{|B|}{2})}(t)\|_{X(0,\infty)}\lesssim\|-\frac{\d u^\circ}{\d t}(t)h_B(t)\|_{X(0,|B|)}\leq\|\nabla u\|_{X(B)},
\end{equation}
where the last inequality is valid thanks to \citep[Lemma~4.1]{CP-ARK}. Exploiting the rearrangement invariance and the fact that the transformation $s\mapsto (|B| - s)$ is measure preserving on $(0,|B|)$ several times (cf.~\citep[Chapter~2, Proposition~7.2]{BS}) together with \eqref{thm:poincare:ineq}, we estimate in a similar way to \eqref{thm:poincare:eq4} and \eqref{thm:poincare:eq5} that
\begin{align*}
\|\chi_{(\frac{|B|}{2}, |B|)}(t)\int_t^\frac{|B|}{2}-\frac{\d u^\circ}{\d s}(s)\,\d s\|_{Y(0, |B|)} &= \|\chi_{(\frac{|B|}{2}, |B|)}(t)\int_{|B| - t}^\frac{|B|}{2}-\frac{\d u^\circ}{\d s}(|B| - s)\,\d s\|_{Y(0, |B|)}\\
&=\|\chi_{(\frac{|B|}{2}, |B|)}(|B| - t)\int_t^\frac{|B|}{2}-\frac{\d u^\circ}{\d s}(|B| - s)\,\d s\|_{Y(0, |B|)}\\
&=\|\chi_{(0, \frac{|B|}{2})}(t)\int_t^\frac{|B|}{2}-\frac{\d u^\circ}{\d s}(|B| - s)\,\d s\|_{Y(0, |B|)}\\
&\lesssim\|-\frac{\d u^\circ}{\d t}(|B| - t)t^{1-\frac{1}{n}}\chi_{(0,\frac{|B|}{2})}(t)\|_{X(0,|B|)}\\
&=\|-\frac{\d u^\circ}{\d t}(t)(|B| - t)^{1-\frac{1}{n}}\chi_{(0,\frac{|B|}{2})}(|B| - t)\|_{X(0,|B|)}\\
&=\|-\frac{\d u^\circ}{\d t}(t)(|B| - t)^{1-\frac{1}{n}}\chi_{(\frac{|B|}{2}, |B|)}(t)\|_{X(0,|B|)}\\
&\lesssim\|-\frac{\d u^\circ}{\d t}(t)h_B(t)\|_{X(0,|B|)}\\
&\leq\|\nabla u\|_{X(B)}.
\end{align*}
Hence
\begin{equation}\label{thm:poincare:eq6}
\|\chi_{(\frac{|B|}{2}, |B|)}(t)\int_t^\frac{|B|}{2}-\frac{\d u^\circ}{\d s}(s)\,\d s\|_{Y(0, |B|)}\lesssim\|\nabla u\|_{X(B)}.
\end{equation}

Finally, the combination of \eqref{thm:poincare:eq4}, \eqref{thm:poincare:eq5} and \eqref{thm:poincare:eq6} establishes \eqref{thm:poincare:eq3}.
\end{proof}

The following proposition tells us that a sequence of integral means of a function from $V^1 X(\rn)$ over increasing balls is bounded under an assumption on $X$. This enables us to find the desired polynomials in \eqref{intr:poincaresobolev:eq6} later.
\begin{proposition}\label{prop:existenceprumeru}
Assume that $X$ is a rearrangement-invariant space over $\rn$ that satisfies $t^{\frac{1}{n} - 1}\chi_{(1,\infty)}(t)\in X'(0,\infty)$. Let $u\in V^1 X(\rn)$ and set
\begin{equation*}
\lambda_k = \frac1{|B_k|}\int_{B_k} u(x)\,\d x,\ k\in\N,
\end{equation*}
where $B_k$ are balls in $\rn$ such that $B_k\subseteq B_{k+1}$ for each $k\in\N$. Then the sequence $\{\lambda_k\}_{k=1}^\infty$ is bounded. In particular, it has a convergent subsequence.
\end{proposition}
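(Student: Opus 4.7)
The plan is to compare each $\lambda_k$ with the fixed reference value $\lambda_1$ by combining the Poincar\'e-type bound of Theorem~\ref{thm:poincare} on the large ball $B_k$ with H\"older's inequality on the fixed small ball $B_1\subseteq B_k$; since the constant in Theorem~\ref{thm:poincare} is ball-independent and the fundamental function is an intrinsic r.i.\ quantity, both ingredients will be uniform in $k$.

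First, I would apply Theorem~\ref{thm:poincare} with $Y=X^1$. To verify its hypothesis \eqref{thm:poincare:ineq} for this target, one observes that for $f\in\Mpl(0,\infty)$ and $g=g^*\in\Mpl(0,\infty)$ Fubini's theorem gives
\begin{equation*}
\int_0^\infty\left(\int_t^\infty f(s)s^{\frac{1}{n}-1}\,\d s\right)g(t)\,\d t=\int_0^\infty f(s)s^{\frac{1}{n}}g^{**}(s)\,\d s\le\|f\|_{X(0,\infty)}\,\sigma_1(g),
\end{equation*}
and since the norm of $(X^1)'$ coincides with $\sigma_1$ (because $X^1=X^1(\sigma_1')$ and $(\sigma_1')'=\sigma_1$), duality delivers \eqref{thm:poincare:ineq} with $Y=X^1$ and constant $1$. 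The hypothesis $t^{\frac{1}{n}-1}\chi_{(1,\infty)}(t)\in X'(0,\infty)$ is precisely what makes $\sigma_1$ a genuine rearrangement-invariant norm (cf.~Remark~\ref{rem:mainrivysvetlenipodminky}). Theorem~\ref{thm:poincare} thereby yields a constant $C=C(n,X)$, independent of $k$, such that
\begin{equation*}
\|u-\lambda_k\|_{X^1(B_k)}\le C\|\nabla u\|_{X(B_k)}\le C\|\nabla u\|_{X(\rn)}\quad\text{for every $k\in\N$.}
\end{equation*}

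Next, since $B_1\subseteq B_k$, I would write $\lambda_k-\lambda_1=\frac{1}{|B_1|}\int_{B_1}(\lambda_k-u(x))\,\d x$ and estimate via the H\"older inequality \eqref{prel:holder} on $B_k$:
\begin{equation*}
|\lambda_k-\lambda_1|\le\frac{1}{|B_1|}\|u-\lambda_k\|_{X^1(B_k)}\,\|\chi_{B_1}\|_{(X^1)'(B_k)}=\frac{\varphi_{(X^1)'}(|B_1|)}{|B_1|}\|u-\lambda_k\|_{X^1(B_k)},
\end{equation*}
where the equality uses the extension-by-zero definition of $(X^1)'(B_k)$. Since $\varphi_{(X^1)'}(|B_1|)$ is finite by (P4) and independent of $k$, combining with the previous step produces a constant $K$, independent of $k$, such that $|\lambda_k|\le|\lambda_1|+K$ for all $k\in\N$. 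Hence $\{\lambda_k\}_{k=1}^\infty$ is bounded in $\R$, and the Bolzano-Weierstrass theorem produces a convergent subsequence.

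The main obstacle, as I see it, is the verification that $Y=X^1$ really does satisfy hypothesis \eqref{thm:poincare:ineq} under the sole assumption on $X'$; once that is in place, the remainder is a routine uniform estimation made possible by the ball-independent constant of Theorem~\ref{thm:poincare} and the rearrangement-invariance of the fundamental function.
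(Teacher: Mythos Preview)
Your argument is correct, and it takes a genuinely different route from the paper's proof. The paper does not invoke Theorem~\ref{thm:poincare} here; instead it uses the boundedness of the Riesz potential $I_1\colon X\to Y$ (available, under the assumption on $X'$, via \citep[Theorem~6.3]{EMMP}) together with the pointwise potential estimate $|u(x)-\lambda_k|\lesssim I_1(|\nabla u|\chi_{B_k})(x)\le I_1(|\nabla u|)(x)$ for a.e.\ $x\in B_k$. Since $I_1(|\nabla u|)\in Y\subseteq\M_0$, there is a single point $x_0\in B_1$ at which both $|u(x_0)|$ and $I_1(|\nabla u|)(x_0)$ are finite, and the triangle inequality then bounds every $|\lambda_k|$ by these two fixed numbers.

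Your approach has the advantage of being entirely internal to the paper: Theorem~\ref{thm:poincare} is already proved and, as you note (and as the paper itself uses later in the proof of Theorem~\ref{thm:mainri}), the hypothesis \eqref{thm:poincare:ineq} for $Y=X^1$ holds with $C_1=1$; thus no external Riesz potential mapping result is needed. It also yields the quantitative bound $|\lambda_k-\lambda_1|\le C\varphi_{(X^1)'}(|B_1|)|B_1|^{-1}\|\nabla u\|_{X(\rn)}$ with $C$ depending only on $n$. The paper's route, by contrast, is shorter and more pointwise in flavour, bypassing the Poincar\'e machinery in favour of a single a.e.\ finiteness observation; its bound, however, depends on the value of $I_1(|\nabla u|)$ at a particular point and is therefore less explicit. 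Two minor remarks: your constant is actually $C=C(n)$ rather than $C(n,X)$, since $C_1=1$; and the identity $\lambda_k-\lambda_1=\frac{1}{|B_1|}\int_{B_1}(\lambda_k-u)$ tacitly uses $u\in L^1(B_1)$, which is guaranteed by \eqref{prel:sobfcejsouintegrovatelnenakoulich}.
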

\begin{proof}
Since $t^{\frac{1}{n} - 1}\chi_{(1,\infty)}(t)\in X'(0,\infty)$, there exists a rearrangement-invariant space $Y$ over $\rn$ such that $I_1\colon X\rightarrow Y$ by \citep[Theorem~6.3]{EMMP}, where the operator $I_1$ (the Riesz potential of order one) is defined, for a  function $f\in\L1loc(\rn)$, as $I_1f(x)=\int_{\rn}\frac{f(y)}{|x-y|^{n-1}}\,\d y,\ x\in\rn$. In particular, it follows that
\begin{equation}\label{prop:existenceprumeru:eq1}
I_1(|\nabla u|)(x) < \infty\quad\text{and}\quad |u(x)|<\infty\quad\text{for a.e.~$x\in\rn$}
\end{equation}
by \eqref{prel:vnorenidosouctu} and $V^1X(\rn)\subseteq\L1loc(\rn)$.

Note that we have that $u\in L^1(B_k)$ and $|\nabla u|\in L^1(B_k)$ for each $k\in\N$ thanks to $V^1X(\rn)\subseteq\L1loc(\rn)$ and $X\subseteq\L1loc(\rn)$. This allows us to exploit a standard potential estimate (e.g.~\cite[Lemma~1.50]{MR1461542}) to obtain that
\begin{equation}\label{prop:existenceprumeru:eq2}
|u(x)-\lambda_k|\lesssim I_1\left(|\nabla u|\chi_{B_k}\right)(x)\leq I_1\left(|\nabla u|\right)(x)\quad\text{for each $k\in\N$ and a.e.~$x\in B_k$},
\end{equation}
where the multiplicative constant depends on the dimension $n$ only. Coupling \eqref{prop:existenceprumeru:eq1} and \eqref{prop:existenceprumeru:eq2} together, we obtain that there exists a point $x_0\in B_1\subseteq B_k$ such that
\begin{equation*}
|u(x_0)|<\infty\quad\text{and}\quad|u(x_0)-\lambda_k|\lesssim I_1\left(|\nabla u|\right)(x_0) < \infty\quad\text{for each $k\in\N$}
\end{equation*}
whence the desired boundedness immediately follows from
\begin{equation*}
|\lambda_k|\lesssim I_1\left(|\nabla u|\right)(x_0) + |u(x_0)|\quad\text{for each $k\in\N$}.
\end{equation*}
\end{proof}

Now, we are finally prepared to prove our main result.
\begin{proof}[Proof of Theorem~\ref{thm:mainri}]
We just remark that $X^m$ is indeed a rearrangement-invariant space by \citep[Theorem~3.1]{M19}.

We start by proving the uniqueness part. Assume that for $u\in V^mX(\rn)$ there exist polynomials $P$ and $\widetilde{P}$ (of order at most $m-1$) such that
\begin{equation*}
\|u-P\|_{X^m}\leq C\|\nabla^m u\|_X
\end{equation*}
and also
\begin{equation*}
\|u-\widetilde{P}\|_{X^m}\leq C\|\nabla^m u\|_X.
\end{equation*}
Hence $P-\widetilde{P}\in X^m$. Since $X^m\subseteq (L^1+L^\infty)$ by \eqref{prel:vnorenidosouctu}, it follows that $P-\widetilde{P}$ is a constant polynomial.

Moreover, we have that
\begin{equation*}
c\varphi_{X^m}(k) = \|(P-\widetilde{P})\chi_{E_k}\|_{X^m}\leq\|u-P\|_{X^m} + \|u-\widetilde{P}\|_{X^m}\lesssim\|\nabla^m u\|_X < \infty \quad\text{for each $k\in\N$},
\end{equation*}
where $c=|P-\widetilde{P}|$ and $E_k\subseteq\rn$ are such that $|E_k|=k$. Hence $c=0$ if $\lim\limits_{t\rightarrow\infty}\varphi_{X^m}(t)=\infty$.



We now proceed to prove the existence of such polynomials by induction on $m$. Assume that $m=1$.

Let $u\in V^1X(\rn)$ and define
\begin{equation*}
\lambda_k = \frac1{|B_k|}\int_{B_k}u(x)\,\d x,\ k\in\N,
\end{equation*}
where $B_k$ is the ball in $\rn$ centered at the origin having its radius equal to $k$. By Proposition~\ref{prop:existenceprumeru} we may assume without loss of generality that there exists a $\lambda\in\R$ such that
\begin{equation}\label{thm:mainri:eq1}
\lim\limits_{k\rightarrow\infty}\lambda_k = \lambda.
\end{equation}

Since the inequality \eqref{thm:poincare:ineq} with $Y=X^1$ holds with $C_1=1$ by virtue of \citep[Theorem~3.3]{M19} and the definition of $\sigma_m$, we have that
\begin{equation}\label{thm:mainri:eq2}
\|u-\lambda_k\|_{X^1(B_k)}\lesssim\|\nabla u\|_{X(B_k)}\leq\|\nabla u\|_X,
\end{equation}
where the multiplicative constant depends only on $m$ and on $n$, by Theorem~\ref{thm:poincare}. Furthermore, we have that
\begin{equation*}
\lim\limits_{k\rightarrow\infty}(u(x)-\lambda_k)\chi_{B_k}(x) = u(x) - \lambda\quad\text{for a.e.}\ x\in\rn
\end{equation*}
by \eqref{thm:mainri:eq1}. Hence
\begin{equation*}
\|u-\lambda\|_{X^1}\leq\liminf\limits_{k\rightarrow\infty}\|u-\lambda_k\|_{X^1(B_k)}\lesssim\|\nabla u\|_X
\end{equation*}
by Fatou's lemma \citep[Chapter~1, Theorem~1.7]{BS} and \eqref{thm:mainri:eq2}. This completes the proof for $m=1$.

Finally, for the inductive step, assume that $1<m<n$ and let $u\in V^mX(\rn)$. Then $\frac{\partial^{|\alpha|}u}{\partial^\alpha x}\in V^1X(\rn)$ for each multi-index $\alpha\in\N_0^n$ such that $|\alpha|=\alpha_1+\cdots+\alpha_n=m-1$. By the induction hypothesis for $m=1$ (clearly $t^{\frac{1}{n} - 1}\chi_{(1,\infty)}(t)\in X'(0,\infty)$ if $t^{\frac{m}{n} - 1}\chi_{(1,\infty)}(t)\in X'(0,\infty)$), there exist $\lambda_\alpha\in\R$ such that
\begin{equation}\label{thm:mainri:eq4}
\|\frac{\partial^{|\alpha|}u}{\partial^\alpha x}-\lambda_\alpha\|_{X^1}\lesssim\|\nabla^m u\|_X\quad\text{for each $\alpha\in\N_0^n$, $|\alpha|=m-1$}.
\end{equation}
Set
\begin{equation*}
v(x)=u(x)-\sum_{\substack{\alpha\in\N_0^n\\|\alpha|=m-1}}\lambda_\alpha x^\alpha,\ \text{a.e.~$x\in\rn$}.
\end{equation*}
Since
\begin{equation}\label{thm:mainri:eq5}
\frac{\partial^{|\alpha|}v}{\partial^\alpha x}=\frac{\partial^{|\alpha|}u}{\partial^\alpha x}-\lambda_\alpha\quad\text{for each $\alpha\in\N_0^n$, $|\alpha|=m-1$},
\end{equation}
we have $v\in V^{m-1}X^1(\rn)$ by \eqref{thm:mainri:eq4}. By the induction hypothesis again (this time for $m-1$), there exists a polynomial $Q$ of order at most $m-2$ such that
\begin{align*}
\|u-(Q+\sum_{\substack{\alpha\in\N_0^n\\|\alpha|=m-1}}\lambda_\alpha x^\alpha)\|_{X^m}&=\|v-Q\|_{X^m}\approx\|v-Q\|_{\left(X^1\right)^{m-1}}\lesssim\|\nabla^{m-1}v\|_{X^1}\\
&\approx\max\left\{\|\frac{\partial^{|\alpha|}v}{\partial^\alpha x}\|_{X^1}\colon\alpha\in N_0^n,|\alpha|=m-1\right\}\\
&=\max\left\{\|\frac{\partial^{|\alpha|}u}{\partial^\alpha x}-\lambda_\alpha\|_{X^1}\colon\alpha\in N_0^n,|\alpha|=m-1\right\}\\
&\lesssim\|\nabla^m u\|_X,
\end{align*}
where the first equivalence follows from \citep[Theorem~3.2]{M19}, the second equality is \eqref{thm:mainri:eq5} and the last inequality is \eqref{thm:mainri:eq4}. We complete the proof by observing that $Q+\sum\limits_{\substack{\alpha\in\N_0^n\\|\alpha|=m-1}}\lambda_\alpha x^\alpha$ is a polynomial of order at most $m-1$.
\end{proof}

\section{Poincar\'{e}-Sobolev inequalities in Orlicz spaces}\label{sec:mainorlicz}
In this section we provide a result concerning Orlicz spaces that is very similar to Theorem~\ref{thm:mainri}. The key difference here is that in the following theorem the resulting function norm is an Orlicz norm.

Let $m < n$ and let $A$ be a Young function satisfying
\begin{equation}\label{thm:mainorlicz:conA}
\int_0 \left(\frac{s}{A(s)}\right)^\frac{m}{n-m}\,\d s < \infty.
\end{equation}
We set
\begin{equation*}
H^\infty =\lim\limits_{t\rightarrow\infty}H_m(t)
\end{equation*}
where $H_m$ is defined by
\begin{equation*}
H_m(t) = \left(\int_0^t \left(\frac{s}{A(s)}\right)^\frac{m}{n-m}\,\d s\right)^\frac{n-m}{n},\ t\geq0.
\end{equation*}
Note that $H^\infty = \infty$ if and only if
\begin{equation*}
\int^\infty \left(\frac{s}{A(s)}\right)^\frac{m}{n-m}\,\d s = \infty.
\end{equation*}
Observe that $H_m$ is an increasing function onto $[0, H_\infty)$ (see \citep[Remark~3.3.1]{VejtekPhD}); hence its inverse function $H_m^{-1}$ is well defined on $[0, H_\infty)$. Lastly, we define
\begin{equation}\label{thm:mainorlicz:defDm}
D_m(t)=\begin{cases}
\left(t\frac{A(H_m^{-1}(t))}{H_m^{-1}(t)}\right)^\frac{n}{n-m},\quad&0\leq t < H^\infty,\\
\infty,\quad&H^\infty\leq t <\infty.
\end{cases}
\end{equation}
\begin{theorem}\label{thm:mainorlicz}
Let $m < n$ and let $A$ be a Young function satisfying \eqref{thm:mainorlicz:conA}. Define the function $A_m$ by
\begin{equation*}
A_m(t) = \int_0^t\frac{D_m(s)}{s}\,\d s,\ t\geq0,
\end{equation*}
where the function $D_m$ is defined by \eqref{thm:mainorlicz:defDm}. Then $A_m$ is a Young function and there exists a positive constant $C$ such that for each $u\in V^mL^A(\rn)$ there exists a unique polynomial $P$ of order at most $m-1$, depending on $u$, that renders the inequality
\begin{equation*}
\|u-P\|_{L^{A_m}}\leq C\|\nabla^m u\|_{L^A}
\end{equation*}
true.
\end{theorem}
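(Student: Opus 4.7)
The plan is to reduce Theorem~\ref{thm:mainorlicz} to its rearrangement-invariant counterpart, Theorem~\ref{thm:mainri}, applied with $X=L^A$. The first step is to translate the Young-function assumption~\eqref{thm:mainorlicz:conA} (a condition on $A$ at zero) into the hypothesis $t^{m/n-1}\chi_{(1,\infty)}(t)\in (L^A)'(0,\infty)$ (a condition at infinity on the associate norm) that Theorem~\ref{thm:mainri} requires. This is a standard Orlicz-duality computation: one identifies $(L^A)'$, up to equivalent norms, with the Orlicz space built from the Young conjugate of $A$, and then evaluates the Luxemburg norm of $t^{m/n-1}\chi_{(1,\infty)}$ via the substitution $t=s^{-n/(n-m)}$; the convergence of the resulting integral is precisely~\eqref{thm:mainorlicz:conA}. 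Theorem~\ref{thm:mainri} then delivers, for each $u\in V^mL^A(\rn)$, a polynomial $P$ of degree at most $m-1$ satisfying
\[
\|u-P\|_{(L^A)^m}\leq C\|\nabla^m u\|_{L^A},
\]
unique up to a constant.

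I would next verify that $A_m$ is a Young function. Since $A$ is convex and vanishes at $0$, the ratio $r\mapsto A(r)/r$ is nondecreasing; composing with the strictly increasing $H_m^{-1}$ shows that $t\mapsto tA(H_m^{-1}(t))/H_m^{-1}(t)$ is nondecreasing on $[0,H^\infty)$, whence so is $D_m$. Consequently $A_m(t)=\int_0^t D_m(s)/s\,\d s$ is convex, left-continuous, vanishes at $0$, and is finite on a neighborhood of~$0$ by~\eqref{thm:mainorlicz:conA}; it is also not identically zero, so it is a genuine Young function.

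The central step, which I expect to be the main obstacle, is establishing the continuous embedding
\[
(L^A)^m\hookrightarrow L^{A_m}.
\]
Once this is in hand, the Orlicz inequality follows at once from the r.i.\ one. Unfolding the definition of $(L^A)^m$ through the associate-norm duality of $\sigma_m$, the task reduces to a Luxemburg-type estimate of the shape
\[
\int_0^\infty A_m\!\left(\frac{f^*(t)}{K\,\|f\|_{(L^A)^m}}\right)\d t\leq 1,
\]
for some absolute constant $K$. I would prove this through the change of variables $t=H_m(s)$, which, coupled with the defining identity~\eqref{thm:mainorlicz:defDm}, converts the integrand into an expression naturally paired with the Luxemburg norm on the associate Orlicz space that tests $\sigma_m$. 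This identification of $A_m$ as the Luxemburg representative of $\sigma_m'$ (up to equivalence) is precisely the optimization argument underpinning the construction of optimal Orlicz targets in Orlicz-Sobolev embedding theory, and I would appeal to the techniques developed there for the technical book-keeping.

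Finally, uniqueness of $P$ is automatic: Theorem~\ref{thm:mainri} already gives uniqueness up to a constant, and the fundamental function of $L^{A_m}$ satisfies $\varphi_{L^{A_m}}(t)=1/A_m^{-1}(1/t)\to\infty$ as $t\to\infty$, since $A_m(0)=0$ forces $A_m^{-1}(s)\to 0$ as $s\to 0^+$. Hence no nonzero constant belongs to $L^{A_m}$, and so the constant $P-\widetilde P$ produced by the r.i.\ uniqueness must vanish, yielding the stated unique polynomial.
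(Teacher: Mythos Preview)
Your proposal is correct and follows essentially the same route as the paper: set $X=L^A$, verify that $t^{m/n-1}\chi_{(1,\infty)}\in X'(0,\infty)$ so that Theorem~\ref{thm:mainri} applies, and then pass from $(L^A)^m$ to $L^{A_m}$ via the embedding $(L^A)^m\hookrightarrow L^{A_m}$. The only difference is one of packaging: the paper obtains all three technical ingredients---the translation of \eqref{thm:mainorlicz:conA} into the $X'$-condition, the fact that $A_m$ is a Young function, and the embedding $(L^A)^m\hookrightarrow L^{A_m}$---by citing \citep[Theorems~3.1 and~6.1]{M19}, whereas you sketch direct arguments for each (and your uniqueness argument via $\varphi_{L^{A_m}}(t)\to\infty$ is more explicit than the paper's, which simply defers to Theorem~\ref{thm:mainri}).
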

\begin{proof}
Set $X=L^A$. Then, combining \citep[Theorem~6.1]{M19} with \citep[Theorem~3.1]{M19}, $A_m$ is a Young function, $t^{\frac{m}{n} - 1}\chi_{(1,\infty)}(t)\in X'(0,\infty)$ and 
\begin{equation*}
\|v\|_{L^A}\lesssim\|v\|_{X^m}\quad\text{for each $v\in\M(\rn)$},
\end{equation*}
where $X^m$ is defined as in Theorem~\ref{thm:mainri}. Our claim then follows from Theorem~\ref{thm:mainri}.
\end{proof}

\begin{remark}\label{rem:mainrivsorlicz}
If a Young function $A$ satisfies \eqref{thm:mainorlicz:conA}, then $t^{\frac{m}{n} - 1}\chi_{(1,\infty)}(t)\in X'(0,\infty)$, where $X=L^A$. Therefore, we may use both Theorem~\ref{thm:mainri} and Theorem~\ref{thm:mainorlicz}. It follows (see the proof above) that
\begin{equation*}
\|u-P\|_{L^{A_m}}\lesssim \|u-P\|_{X^m}\lesssim\|\nabla^m u\|_{L^A}.
\end{equation*}
Hence the inequality that we obtain from Theorem~\ref{thm:mainorlicz} is weaker that the inequality that we obtain from Theorem~\ref{thm:mainri}. However, a possible advantage of Theorem~\ref{thm:mainorlicz} over Theorem~\ref{thm:mainri}, which may be useful in some applications, is that the resulting space $L^{A_m}$ is again an Orlicz space, not a general rearrangement-invariant space.

Furthermore (cf.~Remark~\ref{rem:mainrivysvetlenipodminky}), the convergence of the integral \eqref{thm:mainorlicz:conA} is precisely the condition that ensures the existence of the optimal Orlicz norm $\|\cdot\|_{L^B}$ in the inequality
\begin{equation}\label{rem:optimalniorlicz:eq}
\|u\|_{L^B}\leq C\|\nabla^m u\|_{L^A}\quad\text{for each $u\in V_0^mL^A(\rn)$}.
\end{equation}
More precisely, it was proved in \citep[Theorem~6.1]{M19} that if a Young function $A$ satisfies \eqref{thm:mainorlicz:conA}, then 
\begin{equation*}
\|u\|_{L^{A_m}}\leq C\|\nabla^m u\|_{L^A}\quad\text{for each $u\in V_0^mL^A(\rn)$},
\end{equation*}
where the Orlicz space $L^{A_m}$ is defined as in the theorem above, and the space $L^{A_m}$ is the optimal (i.e.~the smallest) possible Orlicz space that renders \eqref{rem:optimalniorlicz:eq} true, that is, if \eqref{rem:optimalniorlicz:eq} is satisfied for an Orlicz space $L^B$, then $L^{A_m}\subseteq L^B$. On the other hand, if $A$ does not satisfy \eqref{thm:mainorlicz:conA}, then there exist no Orlicz spaces $L^B$ (in fact, even no rearrangement-invariant spaces at all, cf.~\citep[Theorem~5.2]{M19}) rendering \eqref{rem:optimalniorlicz:eq} true.
\end{remark}

\section{Examples}\label{sec:examples}
In this section we provide important particular examples of possible applications of Theorem~\ref{thm:mainri} and Theorem~\ref{thm:mainorlicz}. These examples include a large number of customary function spaces (e.g.~Lebesgue spaces, Lorentz spaces, Orlicz spaces, Zygmund classes, etc.).

The following theorem is a combination of Theorem~\ref{thm:mainri} and \citep[Theorem~5.1]{M19}
\begin{theorem}\label{thm:examplesGLZ}
Let $m\in\N$, $m < n$, and let $p\in[1,\frac{n}{m}]$, $q\in[1,\infty]$ and $\A=[\alpha_0,\alpha_{\infty}]\in\R^2$. Then there exists a constant $C$ such that for each $u\in V^m L^{p, q; \A}(\rn)$ there exists a polynomial $P$, which depends on $u$, of order at most $m-1$ such that
\begin{equation*}
\|u-P\|_{Y}\leq C\|\nabla^m u\|_{L^{p, q; \A}}
\end{equation*}
where
\begin{equation*}
Y =\begin{cases}L^{\frac{np}{n-mp}, q;\A},\quad&p=q=1,\ \alpha_0\geq0,\ \alpha_\infty\leq0\ \text{or}\\
&p\in(1,\frac{n}{m}),\\
L^{\infty, q;\A - 1},\quad&p=\frac{n}{m},\ \alpha_0 < \frac1{q'},\ \alpha_\infty>\frac1{q'},\\
L^{\infty, 1;[-1,\alpha_\infty - 1], [-1,0], [-1,0]},\quad&p=\frac{n}{m},\ q = 1,\ \alpha_0 = 0,\ \alpha_\infty > 0,\\
Y_1,\quad&p=\frac{n}{m},\ q = 1,\ \alpha_0 < 0 ,\ \alpha_\infty = 0,\\
L^\infty,\quad&p=\frac{n}{m},\ q = 1,\ \alpha_0 \geq 0 ,\ \alpha_\infty = 0,\\
Y_2,\quad&p=\frac{n}{m},\ q\in[1,\infty),\ \alpha_0 > \frac1{q'} ,\ \alpha_\infty > \frac1{q'},\\
L^{\infty, q;[-\frac1{q},\alpha_\infty - 1],[-1,0]},\quad&p=\frac{n}{m},\ q\in(1,\infty],\ \alpha_0 = \frac1{q'},\ \alpha_\infty >\frac1{q'},\\
L^{\infty, \infty;[0,\alpha_\infty - 1]},\quad&p=\frac{n}{m},\ q = \infty,\ \alpha_0 > 1,\ \alpha_\infty > 1,\end{cases}
\end{equation*}
where
\begin{align*}
\|f\|_{Y_1} &=\|t^{-1}\ell^{\alpha_0 - 1}(t)f^*(t)\|_{L^1(0,1)},\\
\|f\|_{Y_2} &= \|f\|_{L^\infty} + \|t^{-\frac1{q}}\ell^{\alpha_\infty - 1}(t)f^*(t)\|_{L^q(1,\infty)}.
\end{align*}
Moreover, these polynomials $P$ are unique in all the cases above except for the case $p=\frac{n}{m}$, $q=1$, and $\alpha_\infty = 0$.
\end{theorem}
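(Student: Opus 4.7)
The plan is to derive Theorem~\ref{thm:examplesGLZ} as a direct combination of Theorem~\ref{thm:mainri} with the explicit computation of the optimal rearrangement-invariant target space $X^m$ for $X = L^{p,q;\A}$ that was carried out in~\citep[Theorem~5.1]{M19}. Concretely, I would first verify, using the characterisation of Lorentz--Zygmund norms recalled in Section~\ref{sec:prel} together with~\citep[Theorem~7.1]{OP}, that under each of the eight parameter regimes listed in the statement $X = L^{p,q;\A}$ is genuinely a rearrangement-invariant space over $\rn$, and that its associate space satisfies $t^{m/n-1}\chi_{(1,\infty)}(t) \in X'(0,\infty)$. The latter is a routine estimate for Lorentz--Zygmund norms and reduces to the convergence of an explicit logarithmic integral at infinity; it holds throughout the range $1 \le p \le n/m$ covered by the theorem.

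With these hypotheses in place, Theorem~\ref{thm:mainri} applies directly and supplies, for every $u \in V^m L^{p,q;\A}(\rn)$, a polynomial $P$ of order at most $m-1$ together with the sharp inequality
\begin{equation*}
\|u-P\|_{X^m} \le C\|\nabla^m u\|_X.
\end{equation*}
The role of~\citep[Theorem~5.1]{M19} is to identify the optimal r.i.\ space $X^m$ (or at least an r.i.\ space into which $X^m$ embeds) in each of the eight regimes; the targets $Y$ listed in the statement are precisely those produced there. Combining the embedding $X^m \hookrightarrow Y$ with the previous display yields $\|u-P\|_Y \lesssim \|u-P\|_{X^m} \lesssim \|\nabla^m u\|_X$, which is the asserted inequality.

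For the uniqueness claim I would follow the scheme of the uniqueness argument already present in the proof of Theorem~\ref{thm:mainri}. If $P$ and $\widetilde{P}$ are two admissible polynomials, then $P-\widetilde{P} \in Y \subseteq L^1 + L^\infty$ by~\eqref{prel:vnorenidosouctu}, and a polynomial belonging to $L^1 + L^\infty$ must be a constant $c$. Consequently, for every measurable $E_k \subseteq \rn$ with $|E_k| = k$ we obtain $|c|\varphi_Y(k) \le \|u-P\|_Y + \|u-\widetilde{P}\|_Y \lesssim \|\nabla^m u\|_X$, so that $c = 0$ whenever $\lim_{t\to\infty}\varphi_Y(t) = \infty$. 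A short inspection of the fundamental function in each of the eight cases shows that this limit is indeed infinite except for $Y = L^\infty$ and $Y = Y_1$, both of which have bounded fundamental functions (for $Y_1$, note that $\varphi_{Y_1}(t) = \int_0^{\min(1,t)} s^{-1}\ell^{\alpha_0-1}(s)\,\d s$ is constant for $t \ge 1$, finite precisely because $\alpha_0 < 0$). These are exactly the two targets that appear in the excluded case $p = n/m$, $q = 1$, $\alpha_\infty = 0$, matching the statement.

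The main obstacle, in principle, is the case-by-case identification of $X^m$, but this bookkeeping is exactly the content of~\citep[Theorem~5.1]{M19} and is already done there, so beyond checking the applicability of Theorem~\ref{thm:mainri} and carrying out the short fundamental-function computation for the uniqueness step, no new calculation is required.
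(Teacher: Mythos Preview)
Your proposal is correct and follows exactly the route the paper takes: the paper states Theorem~\ref{thm:examplesGLZ} as a direct combination of Theorem~\ref{thm:mainri} with \citep[Theorem~5.1]{M19}, which is precisely what you outline. Your additional care in spelling out the uniqueness argument via the fundamental functions of the target spaces $Y$ is a legitimate elaboration of what the paper leaves implicit (note that in fact $Y = X^m$ up to equivalent norms in each case, so your hedge ``or at least an r.i.\ space into which $X^m$ embeds'' is unnecessary).
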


\begin{remark}\label{rem:boundedness}
In particular, it follows from the theorem above that if $|\nabla u|\in L^{n,1}(\rn)$, then $u$ is bounded on $\rn$ (more precisely, its continuous representative is). If $m\geq2$ and $|\nabla^m u|\in L^{\frac{n}{m},1}(\rn)$, then $u$ need not be bounded on the entire space (consider, for example, $u(x)=x$), but it differs from a polynomial of order at most $m-1$ by a bounded function on $\rn$. However, if $|\nabla^m u|\in L^{\frac{n}{m},1}(\rn)$ ($m\geq2$) and $u$ has ``some decay at infinity'', that is, $\left|\left\{x\in\rn\colon|u(x)|>\lambda\right\}\right|<\infty$ for each $\lambda>0$, it follows again that $u$ is not only continuous but also bounded on the entire space.
\end{remark}

Important examples of Orlicz spaces, frequently appearing in applications, are so-called Zygmund classes $L^p\left(\log L\right)^\alpha$ (\citep{MR0107776}) and their different variations. It is worth noting that even though the more general variations of Zygmund classes contained in the following theorem are special instances of Lorentz-Zygmund spaces when $p_0=p_\infty$ (\citep[Chapter~8]{OP}), which were considered in the preceding theorem, and the Orlicz spaces $L^B$ from the following theorem are in general ``worse'' (i.e.~$\|\cdot\|_{L^B}$ is a weaker norm than $\|\cdot\|_{Y}$) than the rearrangement-invariant spaces $Y$ from the preceding theorem corresponding to the same $X=L^A$ (see Remark~\ref{rem:mainrivsorlicz}), the importance of the following theorem stems from the fact that the spaces $L^B$ themselves are from the narrower class of Orlicz spaces. The following theorem is a combination of Theorem~\ref{thm:mainorlicz} and \citep[Theorem~6.3]{M19}
\begin{theorem}
Let $m\in\N$, $m < n$, and let $p_0\in[1,\frac{n}{m}]$, $p_\infty\in[1,\infty)$ and $\alpha_0,\,\alpha_\infty\in\R$. Let $A(t)$ be a Young function that is equivalent to
\begin{equation*}
\begin{cases}
t^{p_0}\ell^{\alpha_0}(t)\quad&\text{near zero},\\
t^{p_\infty}\ell^{\alpha_\infty}(t)\quad&\text{near infinity}.
\end{cases}
\end{equation*}

Then there exists a constant $C$ such that for each $u\in V^mL^A(\rn)$ there exists a unique polynomial $P$, which depends on $u$, of order at most $m-1$ such that
\begin{equation*}
\|u-P\|_{L^B}\leq C\|\nabla^m u\|_{L^A}
\end{equation*}
where the Young function $B(t)$ is equivalent to
\begin{equation*}
\begin{cases}
t^{\frac{np_0}{n-mp_0}}\ell^{\frac{n\alpha_0}{n-mp_0}}(t),\quad&\text{$p_0=1$, $\alpha_0\leq0$ or}\\
																															&p_0\in(1,\frac{n}{m}),\\
e^{-t^{\frac{n}{n-(1+\alpha_0)m}}},\quad&\text{$p_0=\frac{n}{m}$, $\alpha_0>\frac{n-m}{m}$},
\end{cases}
\end{equation*}
near zero and to
\begin{equation*}
\begin{cases}
t^{\frac{np_\infty}{n-mp_\infty}}\ell^{\frac{n\alpha_\infty}{n-mp_\infty}}(t),\quad&\text{$p_\infty=1$, $\alpha_\infty\geq0$ or}\\
																																							&p_\infty\in(1,\frac{n}{m}),\\
e^{t^{\frac{n}{n-(1+\alpha_\infty)m}}},\quad&\text{$p_\infty=\frac{n}{m}$, $\alpha_\infty < \frac{n-m}{m}$},\\
e^{e^{t^{\frac{n}{n - m}}}},\quad&\text{$p_\infty=\frac{n}{m}$, $\alpha_\infty = \frac{n-m}{m}$},\\
\infty,\quad&\text{$p_\infty=\frac{n}{m}$, $\alpha_\infty>\frac{n-m}{m}$ or}\\
\quad&\text{$p_\infty\in(\frac{n}{m},\infty)$},
\end{cases}
\end{equation*}
near infinity.
\end{theorem}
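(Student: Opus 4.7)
The plan is to obtain the inequality as a direct application of Theorem~\ref{thm:mainorlicz}, together with an explicit computation identifying the optimal Orlicz target $L^{A_m}$ with the Orlicz space $L^B$ described in the statement. First I would verify the hypothesis \eqref{thm:mainorlicz:conA}: since $A(s)\approx s^{p_0}\ell^{\alpha_0}(s)$ near zero, the integrand $(s/A(s))^{m/(n-m)}$ behaves there like $s^{m(1-p_0)/(n-m)}\ell^{-m\alpha_0/(n-m)}(s)$, and an elementary convergence check at $0$ yields exactly the admissible sub-cases listed for $(p_0,\alpha_0)$: either $p_0\in(1,n/m)$, or $p_0=1$ with $\alpha_0\leq 0$, or $p_0=n/m$ with $\alpha_0>(n-m)/m$. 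This is precisely the range on which the tabulated $B$ is declared near zero.

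Once \eqref{thm:mainorlicz:conA} is in force, Theorem~\ref{thm:mainorlicz} immediately delivers a unique polynomial $P$ of order at most $m-1$ and a constant $C$ such that $\|u-P\|_{L^{A_m}}\leq C\|\nabla^m u\|_{L^A}$. Thus the entire remaining content of the theorem is the identification of $A_m$ with $B$ up to equivalence of Young functions near zero and near infinity. The plan for this identification is to carry out the three-step computation contained in the definition of $A_m$: compute the asymptotics of $H_m(t)^{n/(n-m)}=\int_0^t(s/A(s))^{m/(n-m)}\,\d s$ near zero and near infinity; invert these asymptotics to obtain the behaviour of $H_m^{-1}$; and then substitute into $D_m(t)=(tA(H_m^{-1}(t))/H_m^{-1}(t))^{n/(n-m)}$ and integrate to recover $A_m(t)=\int_0^t D_m(s)/s\,\d s$. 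All of these computations, for exactly the power-logarithmic profiles under consideration, have already been carried out in \citep[Theorem~6.3]{M19}, which I would simply invoke to conclude that $A_m\approx B$ near both endpoints. Since equivalent Young functions generate equivalent Luxemburg norms, the inequality on $L^{A_m}$ transfers to $L^B$, finishing the proof.

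I expect the genuinely hard part to be not conceptual but purely bookkeeping: the critical regime $p_0=n/m$ with $\alpha_0>(n-m)/m$, and analogously $p_\infty=n/m$ with $\alpha_\infty<(n-m)/m$, $=(n-m)/m$, or $>(n-m)/m$, produces Young functions $B$ of exponential and double-exponential type, and keeping track of which combinations of endpoint exponents trigger which branch of $B$ forces one to treat each line of the tabulated conclusion separately. One must also verify that $H_m^{-1}$ exists on the relevant interval in the divergent case $H^\infty=\infty$ and that the formula for $D_m$ behaves correctly in the borderline regimes where $\alpha_\infty=(n-m)/m$ introduces an extra logarithmic layer. Apart from this case analysis, no new estimate beyond Theorem~\ref{thm:mainorlicz} and the asymptotic calculation already present in \citep[Theorem~6.3]{M19} is required.
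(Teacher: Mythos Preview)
Your proposal is correct and matches the paper's approach exactly: the paper states this theorem as a direct combination of Theorem~\ref{thm:mainorlicz} and \citep[Theorem~6.3]{M19}, without writing out a separate proof. Your verification of \eqref{thm:mainorlicz:conA} and the case analysis for the asymptotics of $A_m$ are precisely the content of that cited result, so nothing further is needed.
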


In our last example we couple Theorem~\ref{thm:mainri} with \citep[Theorem~5.2]{M19} to obtain a specific version of \eqref{thm:mainri:eq} when $X$ is an Orlicz space $L^A$. It turns out that we need to distinguish whether the integral
\begin{equation}\label{thm:optimalrangerforOrlicz:connek}
\int^\infty \left(\frac{s}{A(s)}\right)^\frac{m}{n-m}\,\d s
\end{equation}
converges or diverges. Assume that $m<n$ and that $A$ is a Young function satisfying \eqref{thm:mainorlicz:conA}. Let $a$ be the left-continuous derivative of $A$, that is, $a$ and $A$ are related as in \eqref{pre:integralformofYoungfunction}. We define a function $E_m$ by
\begin{equation}\label{thm:optimalrangerforOrlicz:defEm}
E_m(t) = \int _0^t e_m(s)\,\d s,\ t\geq0,
\end{equation}
where $e_m$ is the nondecreasing, left-continuous function in $[0, \infty)$ satisfying
\begin{equation*}
e_m^{-1}(t) =  \left(\int _{a^{-1}(t)}^{\infty}\left(\int _0^s \left(\frac{1}{a(\tau)}\right)^{\frac{m}{n-m}}\,\d\tau\right)^{-\frac{n}{m}}\frac1{a(s)^{\frac{n}{n-m}}}\,\d s\right)^{\frac{m}{m-n}}\quad\text{for $t\geq0$}.
\end{equation*}
Then $E_m$ is a finite-valued Young function satisfying \eqref{pre:conditiononAforOrliczLorentz} with $p = \frac{n}{m}$ (see~\citep[Proposition~2.2]{MR2073127}).
\begin{theorem}
Let $m\in\N$, $m < n$, and let $A$ be a Young function satisfying \eqref{thm:mainorlicz:conA}. Then there exists a constant $C$ such that for each $u\in V^m L^A(\rn)$ there exists a polynomial $P$, which depends on $u$, of order at most $m-1$ such that
\begin{equation*}
\|u-P\|_{Y}\leq C\|\nabla^m u\|_{L^A}
\end{equation*}
where
\begin{equation*}
Y=\begin{cases}
L(\frac{n}{m}, 1, E_m)\quad&\text{if the integral \eqref{thm:optimalrangerforOrlicz:connek} diverges},\\
L(\frac{n}{m}, 1, E_m)\cap L^\infty\quad&\text{if the integral \eqref{thm:optimalrangerforOrlicz:connek} converges}.
\end{cases}
\end{equation*}
The Young function $E_m$ is defined by \eqref{thm:optimalrangerforOrlicz:defEm}.
\end{theorem}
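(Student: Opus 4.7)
The plan is to obtain this theorem as a direct consequence of Theorem~\ref{thm:mainri} together with the identification of the optimal rearrangement-invariant space $X^m$ when $X=L^A$, which is provided by the cited \citep[Theorem~5.2]{M19}. So the proof is essentially a combination result rather than a new argument from scratch.

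First I would set $X=L^A$ and verify that the hypothesis of Theorem~\ref{thm:mainri} is satisfied, namely that $t^{m/n-1}\chi_{(1,\infty)}(t)\in X'(0,\infty)$. This implication is already noted in Remark~\ref{rem:mainrivsorlicz}: condition \eqref{thm:mainorlicz:conA} on the Young function $A$ is precisely (or at least stronger than) what is needed for the critical Riesz-type condition on the associate norm to hold. Once this hypothesis is in place, Theorem~\ref{thm:mainri} delivers, for every $u\in V^mL^A(\rn)=V^mX(\rn)$, a polynomial $P$ of order at most $m-1$ such that
\begin{equation*}
\|u-P\|_{X^m}\leq C\|\nabla^m u\|_{L^A},
\end{equation*}
with $C$ depending only on $m$ and $n$, where the rearrangement-invariant space $X^m=X^m(\sigma_m')$ is the one constructed in Theorem~\ref{thm:mainri}.

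The remaining (and in fact the only nontrivial) step is to identify this abstractly defined space $X^m$ with the concrete Orlicz--Lorentz space appearing in the conclusion. This is exactly the content of \citep[Theorem~5.2]{M19}, which, under hypothesis \eqref{thm:mainorlicz:conA}, shows that
\begin{equation*}
X^m=\begin{cases}L(\tfrac{n}{m},1,E_m) &\text{if \eqref{thm:optimalrangerforOrlicz:connek} diverges},\\ L(\tfrac{n}{m},1,E_m)\cap L^\infty &\text{if \eqref{thm:optimalrangerforOrlicz:connek} converges},\end{cases}
\end{equation*}
with $E_m$ as in \eqref{thm:optimalrangerforOrlicz:defEm}. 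Here one needs to recall that $E_m$ is a bona fide Young function satisfying the Orlicz--Lorentz integrability condition \eqref{pre:conditiononAforOrliczLorentz} with $p=n/m$, so that $L(\tfrac{n}{m},1,E_m)$ is actually a rearrangement-invariant space; this is already recorded in the discussion preceding the theorem. Substituting this identification into the inequality above yields the claim, with the dichotomy between the divergent and convergent cases of \eqref{thm:optimalrangerforOrlicz:connek} reflecting whether functions controlled by $\|\nabla^m u\|_{L^A}$ are automatically bounded or not.

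The potential obstacle here is really only notational/bookkeeping: verifying that the concrete formula for $X^m$ from \citep[Theorem~5.2]{M19} matches the abstract definition via $\sigma_m$ used in Theorem~\ref{thm:mainri}, and that the function $E_m$ constructed through the implicitly defined $e_m$ produces the right Orlicz--Lorentz space. Since this identification is carried out in the cited reference (which in turn rests on the structural results about $\sigma_m'$ for Orlicz $X$), nothing new is required beyond invoking it. The uniqueness of the polynomial $P$ modulo constants, and full uniqueness under $\lim_{t\to\infty}\varphi_{X^m}(t)=\infty$, is inherited from the corresponding statements in Theorem~\ref{thm:mainri}.
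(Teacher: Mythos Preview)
Your proposal is correct and follows exactly the approach indicated in the paper: the theorem is obtained by coupling Theorem~\ref{thm:mainri} with \citep[Theorem~5.2]{M19}, using the fact (recorded in Remark~\ref{rem:mainrivsorlicz} and in the proof of Theorem~\ref{thm:mainorlicz}) that condition~\eqref{thm:mainorlicz:conA} implies $t^{m/n-1}\chi_{(1,\infty)}\in X'(0,\infty)$ for $X=L^A$. The paper itself does not spell out a proof beyond stating this combination, so your write-up is in fact more detailed than what appears there.
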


\bibliography{\jobname}

\end{document}